\documentclass[12pt]{article}

\usepackage{amssymb}
\usepackage{amsthm}
\usepackage{amsmath}

\usepackage{graphicx}
\usepackage[figuresright]{rotating}

\usepackage{float}
\usepackage{makeidx}
\usepackage{amsfonts}
\usepackage[margin=2.5cm]{geometry}
\usepackage{enumerate}
\theoremstyle{plain}
\newtheorem{theorem}{Theorem}[section]

\newtheorem{lemma}[theorem]{Lemma}
\newtheorem{corollary}[theorem]{Corollary}
\newtheorem{conjecture}[theorem]{Conjecture}
\newtheorem{proposition}[theorem]{Proposition}

\newtheorem{problem}[theorem]{Problem}
\theoremstyle{definition} 
\newtheorem{claim}[theorem]{Claim}

\newcommand{\defi}{\mathrm{def}}

\numberwithin{equation}{section}

\title{Local measures of interval edge-uncolorability}

\author{Carl Johan Casselgren,Petros A. Petrosyan}

\author{
{\sl Carl Johan Casselgren}\thanks{{\it E-mail address:} 
carl.johan.casselgren@liu.se}\\ 
Department of Mathematics \\
Link\"oping University \\ 
SE-581 83 Link\"oping, Sweden
\and
{\sl Petros A. Petrosyan}\thanks{{\it E-mail address:} 
petros\_petrosyan@ysu.am} \\
Department of Informatics \\ 
and Applied Mathematics,\\
Yerevan State University \\ 
0025, Armenia
}

\begin{document}

\maketitle

\begin{abstract}
An interval edge coloring of a graph is a proper edge coloring by integers
such that the colors on the edges incident with any vertex form an interval
of integers. Not all graphs are interval colorable; a simple counterexample is $K_3$.

The {\em (interval coloring) deficiency} of a graph $G$ is the minimum number of pendant 
edges whose addition to $G$ yields a graph with an interval edge coloring.
In this paper, we introduce and study further measures of how far from being interval
colorable a graph is. 
The {\em local deficiency} of a graph $G$ is the smallest
number of pendant edges that needs to be added at every vertex of $G$ in order to obtain
a graph with an interval edge coloring; we can think of the colors of these
added edges as ''locally missing'' at a vertex.
We also study a weaker version of this notion, the {\em weak local deficiency},
which informally is the size of a largest set of consecutive integers ''locally missing'' at
a vertex in a proper edge coloring of $G$ minimizing this size.

We compare weak local deficiency, local deficiency, and deficiency, and show
that the difference can be arbitrarily large in both cases.  Moreover, 
we give concrete examples of graphs whose weak local deficiency
(and thus local deficiency) grows
with the number of vertices as well as with the maximum degree.
We also prove some constructive results on graphs with small weak local deficiency.
In particular, all complete multipartite graphs have weak local deficiency
at most $2$, and many complete multipartite graphs have weak local deficiency at most $1$.
Moreover, bipartite graphs with maximum degree at most $6$, and Eulerian bipartite
graphs with maximum degree at most $8$ both have weak local deficiency at most $1$.
We conclude the paper by pointing to several open questions for further research.


\end{abstract}

Keywords: Edge coloring, interval edge coloring, deficiency, near-interval coloring
	
\bigskip

\section{Introduction}\

	Edge coloring can be used for modeling a wide variety of problems arising in
	computer science and operations research such as
	scheduling, frequency assignment, register allocations, etc.
	Many applications involve extra constraints. It may for instance be the case
	that we want to avoid interruptions in a scheduling problem; this constraint naturally
	leads to the notion of an {\em interval coloring}.
	
An {\em interval coloring} of a graph  is a proper edge coloring  by integers such that the colors on the edges incident to any vertex  form an interval of integers; this notion was introduced by Asratian and Kamalian  
\cite{AsrKam} (available in English as 
\cite{{AsrKamJCTB}}), motivated by the problem of finding compact school timetables, that is, timetables such that the lectures of each teacher and each class are scheduled at consecutive periods.

Not every graph has an interval coloring, since a graph
$G$ with an interval coloring must have
a proper $\Delta(G)$-edge coloring \cite{AsrKam}, where
$\Delta(G)$ denotes the maximum degree of $G$.
Sevastjanov  \cite{Sevastjanov} proved that  determining whether
a bipartite graph has an interval coloring is $\mathcal{NP}$-complete.
Nevertheless, trees \cite{Hansen, Kampreprint},
regular and complete bipartite graphs \cite{AsrKam,Hansen, Kampreprint},
grids \cite{GiaroKubale1}, doubly convex bipartite graphs \cite{AsrDenHag}, subcubic bipartite graphs 
\cite{Hansen},
and simple outerplanar bipartite graphs \cite{GiaroKubale2}
all have interval colorings. Moreover, Class 1 graphs of maximum degree at most $3$ are interval colorable \cite{AsratianCasselgrenPetrosyan}. Further results on interval colorings appear in e.g. \cite{Kubale}.

A well-known conjecture \cite{JensenToft,STSCHF} suggests that all $(a,b)$-biregular
graphs have interval colorings, where a bipartite graph is {\em $(a,b)$-biregular}
if all vertices in one part have degree $a$ and all vertices in the other part have degree $b$.
This seemingly very difficult conjecture is still wide open, although it is known that
all $(2,b)$-biregular \cite{Hansen, HansonLotenToft, KamMir, unpublished}
as well as $(3,6)$-biregular graphs \cite{CarlJToft} have interval colorings.
The first unsolved case $(a,b)=(3,4)$ have been considered by several authors
\cite{AsratianCasselgrenVandenWest, Pyatkin, YangLi}, but remains open.

In the context of interval coloring, the {\em deficiency} $\text{def}(G)$ of a graph $G$ is the minimum number
of pendant edges that has to be added to $G$ to yield an interval colorable graph. This notion 
was first considered by Giaro et al.~\cite{GiaroKubaleMalaf1, GiaroKubaleMalaf2}. They determined
the deficiency of some basic families of graphs, and proved that there are families of graphs 
whose deficiency approaches the number of vertices. Based on this evidence, Bouchard, Hertz and Desaulniers \cite{BouchardHertz} conjectured that $\defi(G) \leq |V(G)|$ for every graph $G$. Deficiency of graphs have been studied in several subsequent papers such as \cite{Schwartz, B-OD-BHal, BorowieckaDrgas, Hrant, BouchardHertz, PetrosHrant2}.

In this paper, we introduce and study further measures of how far from being interval colorable a graph is. 
The {\em local deficiency} of a graph $G$, denoted by $\mathrm{def}_{loc}(G)$, is the smallest number of pendant edges that needs to be added at every vertex of $G$ in order to obtain
a graph with an interval edge coloring; we can think of the colors of these
added edges as ``locally missing'' at a vertex in an interval coloring of the resulting graph.
We also study a weaker version of this notion, the {\em weak local deficiency}.

The {\em weak local deficiency} of a set $A$ of integers, denoted by $\mathrm{def}_{wloc}(A)$, is the largest set $I$ of consecutive integers that is necessary to add to $A$ to obtain an interval of integers; for example, the weak local deficiency
of $\{1,2,7,8,12,15\}$ is $4$, because $|\{3,4,5,6\}|=4$.
The {\em weak local deficiency $\mathrm{def}_{wloc}(G,f)$ of an edge coloring} $f$ of $G$ 
is defined as $\mathrm{def}_{wloc}(G,f)=\max_{v\in V(G)} \mathrm{def}_{wloc}(f(v))$, where
$f(v)$ denotes the set of colors present on edges incident with $v$.
The {\em weak local deficiency $\mathrm{def}_{wloc}(G)$ of $G$} is the minimum of
$\mathrm{def}_{wloc}(G,f)$ taken over all proper edge colorings $f$ of $G$.

Thus,
informally we can think of the weak local deficiency as the smallest size of a 
largest set of consecutive integers ''missing'' at
a vertex in a proper edge coloring of $G$, where {\em smallest} refers to the choice of the edge coloring of $G$.
So where local deficiency informally
counts the maximum number of ''missing colors'' at a vertex, the weak local deficiency
counts the maximum number of missing consecutive colors, i.e. the maximum size of a ``gap''
in the set of colors present at a vertex.

Edge colorings with small local deficiency, say local deficiency $k$, has a natural interpretation as schedules
where every involved party has at most $k$ interruptions, 
and is thus interesting in applications where compact schedules
are desirable. On the other hand, edge colorings with weak local deficiency $k$
correspond to schedules where no involved party has an interruption of length
greater than $k$ consecutive time periods, and is thus relevant for situations
where  interruptions may be acceptable but we need to bound the maximum
idle time.

Many graphs with large deficiency satisfy that only one edge needs to be added at every vertex
to yield a graph with an interval coloring; this holds for  e.g. regular graphs, both for the dense case
(complete graphs of odd order), as well as for the sparse case (disjoint union of odd cycles).
Thus, there are graphs with large deficiency which {\em locally} are ``almost'' interval colorable;
such graphs have been studied in the context of {\em near-interval colorings},
see e.g.  \cite{CarlJToft, CasselgrenPetrosyan, PetAraBagh, PetrosHranTigran}.
Local deficiency generalizes this property in a natural way.

Trivially, we have that $\mathrm{def}_{wloc}(G) \leq  \mathrm{def}_{loc}(G) \leq \mathrm{def}(G)$ for every graph $G$,
and $$\mathrm{def}_{wloc}(G) =  \mathrm{def}_{loc}(G) = \mathrm{def}(G)=0$$ if and only if $G$ is interval colorable.

By the example of regular graphs, the deficiency of a graph
may be arbitrary much larger than the local deficiency. We shall show that the same
holds for weak local deficiency compared to local deficiency.

Clearly, $\mathrm{def}_{loc}(G) \leq \Delta(G)- \delta(G)+1$ for any graph $G$,
where  $\delta(G)$ denotes the minimum degree in $G$.
Here, we present concrete families of graphs whose local deficiency grows linearly both 
with the maximum degree and
with the number of vertices. We also give examples where the weak local deficiency grows with the maximum degree
and with the number of vertices.

Khachatrian \cite{Hrant} established an upper bound on the deficiency of an outerplanar graph.
His proof  yields an upper bound on the local deficiency of a graph, and, moreover, 
it also implies that the weak local deficiency of an outerplanar graph is at most $1$.

In the present paper, we find further families of graphs with small local deficiency
and weak local deficiency, with a particular focus on the case when these invariants equal $1$.
In particular, we prove the following constructive results:
\begin{itemize}
	
	\item  Complete tripartite graphs have weak local deficiency at most $1$, general complete multipartite graphs have weak local deficiency at most $2$,
	and such graphs with a unique part of largest size have weak local deficiency at most $1$.

	\item
	Bipartite graphs with maximum degree at most $6$, 
	and
	Eulerian bipartite graphs with maximum degree at most $8$ both have 
	weak local deficiency at most $1$.

	\item Graphs with maximum degree at most $5$, where no two vertices of degree $3$ are adjacent, have weak local deficiency at most $1$.

\end{itemize}

	In Section 2, we prove our results for general graphs with small maximum degree
	and in Section 3 we consider complete multipartite graphs. Section 4 contains the proofs of our results on
	bipartite graphs.
	In Section 5 we consider graphs with large values of local and weak local deficiency
	and show that there are graphs with arbitrary large local deficiency, as well as weak local deficiency, 
	and that the difference
	between these two invariants may be arbitrarily large.
	In particular, we show that so-called generalized Hertz graphs have local deficiency 
	which grows linearly with the maximum degree, as well as with the number of vertices.
	We conclude the paper by pointing to some open questions and suggestions for further
	research.

\section{General graphs with small maximum degree}

	In this section we consider general graphs. 
First we give some preparatory definitions.
We use standard terminology of graph theory as in
e.g. \cite{West}.
$V(G)$ and $E(G)$ denote the sets of vertices and edges of a graph $G$, respectively. The degree of a vertex $v\in V(G)$ is denoted by $d_{G}(v)$ (or just $d(v)$), the maximum and minimum degrees of vertices in $G$ by $\Delta(G)$ and $\delta(G)$, respectively. A graph is {\em even} if all its degrees are even.

	A set $A$ of consecutive integers is called an {\em interval}, $A$ is a
	{\em near-interval}, if there is an integer $i$ such that $A \cup \{i\}$ is an interval,
	and it is a {\em weak near-interval} if there is a set $I=\{i_1,\dots, i_k\}$ of
	integers, no two of which are consecutive, 
	such that $A \cup I$ is an interval. A set $A$ of integers is a {\em cyclic interval
	modulo $t$} if it is an interval under the extra condition that color $t$ and $1$ are considered
	consecutive.

	Graphs with local deficiency $1$ thus have proper edge colorings where the colors
	of the edges incident with any vertex form a near-interval, a so-called {\em near-interval coloring};
	for graphs with weak local deficiency $1$, we call an edge coloring realizing this property
	a {\em weak near-interval coloring}.

	For an edge coloring $\varphi$ of a graph $G$ and any $v\in V(G)$, $\varphi(v)$ (or $\varphi_{G}(v))$ denotes the set of colors appearing on the edges incident with $v$. 
The smallest and largest colors of $\varphi(v)$
are denoted by $\underline \varphi(v)$ and $\overline \varphi(v)$, respectively.
    We say that $\varphi$ is {\em (weakly) near-interval}
	at $v$ if $\varphi(v)$ is a (weak) near-interval.

	We shall need the following theorem due to Fournier \cite{Fournier}.
\begin{theorem}
\label{th:fournier}
	If $G$ is a graph where no two vertices of maximum degree are adjacent, then $G$ is Class 1.
\end{theorem}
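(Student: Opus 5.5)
The statement is Fournier's theorem: if the vertices of maximum degree $\Delta$ form an independent set, then $G$ has a proper $\Delta$-edge coloring. I would use the standard Kempe-chain/Vizing-fan argument by induction on the number of edges.

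\textbf{Setup.} Let $\Delta=\Delta(G)$ and remove an edge $e=uv$. Every vertex of $G-e$ has degree at most $\Delta$, and the set of maximum-degree vertices of $G-e$ is contained in that of $G$, so it is still independent. If $\Delta(G-e)<\Delta$, Vizing's theorem colors $G-e$ with at most $\Delta$ colors. Otherwise induction gives a proper $\Delta$-edge coloring $\varphi$ of $G-e$. It remains to extend $\varphi$ to $e$, possibly after recoloring.

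\textbf{Choice of the edge.} Since $u$ and $v$ are adjacent, at least one of them, say $u$, has $d_G(u)\le \Delta-1$. Then $u$ misses at least two colors in $\varphi$, and $v$ misses at least one. I would build a Vizing fan at $v$ rather than at $u$, since the hypothesis is needed to control the neighbours of $v$. The fan is $u=w_0,w_1,\dots,w_k$, with $vw_i$ colored by a color missing at $w_{i-1}$. Each fan vertex $w_i$ is a neighbour of $v$. If $d_G(v)=\Delta$, then each $w_i$ has degree at most $\Delta-1$ in $G$, so each $w_i$ misses at least one color in $\varphi$ (at least two for $w_0=u$). If $d_G(v)<\Delta$, then $v$ itself misses two colors, and the standard argument below applies with even more room.

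\textbf{Completing the argument.} Take the usual maximal fan. If some $w_i$ misses a color that is also missing at $v$, shift the fan and color $e$. Otherwise let $\alpha$ be missing at $v$ and let $\beta$ be missing at $w_k$, where $\beta=\varphi(vw_j)$ for some $j$. Consider the $\alpha/\beta$ Kempe path starting at $w_k$. It ends either at $v$, at $w_{j-1}$, or elsewhere. In the usual Vizing argument one of these three cases allows a swap followed by a fan shift. That argument is valid here because it needs only that every fan vertex misses at least one color from the palette $\{1,\dots,\Delta\}$, not $\Delta+1$ colors. Ordinary Vizing gets this for free from the extra color; here it is supplied exactly by the hypothesis that every $w_i$ has degree below $\Delta$.

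\textbf{Main obstacle.} The delicate step is verifying that the three-case Kempe analysis never needs a color missing at $v$ beyond the single one available when $d(v)=\Delta$. I would check this by following the standard proof of Vizing's adjacency lemma, which states that a vertex adjacent to a vertex of a $\Delta$-critical edge has many $\Delta$-neighbours. Alternatively, I would argue by criticality. A minimal counterexample would contain a $\Delta$-critical subgraph, and the adjacency lemma implies that every edge $xy$ of such a subgraph has $x$ adjacent to at least $\Delta-d(y)+1\ge 1$ vertices of degree $\Delta$ other than $y$. Taking $x$ of degree $\Delta$ gives two adjacent maximum-degree vertices, a contradiction. This second route is cleaner, and I would present it as the main proof.
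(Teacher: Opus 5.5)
The paper does not prove this statement. It is quoted from Fournier and used as a black box, so there is no argument of the paper to compare yours with.

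\textbf{Your second route is correct.} The route you designate as the main proof is the standard derivation and works, but make one step explicit. If $G$ is Class 2, delete edges while the chromatic index stays $\Delta+1$ and keep the nontrivial component; this gives a $\Delta$-critical subgraph $H$. Vizing's theorem gives $\Delta(H)\ge \chi'(H)-1=\Delta$, so $\Delta(H)=\Delta$. Hence some $x$ has $d_H(x)=\Delta$, and every vertex of degree $\Delta$ in $H$ also has degree $\Delta$ in $G$. Applying Vizing's adjacency lemma to any edge $xy$ of $H$ then yields a neighbour $z$ of $x$ with $d_H(z)=\Delta$, so $x$ and $z$ are adjacent $\Delta$-vertices of $G$. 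Minimality of the counterexample plays no role.

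What this route buys is brevity and strength. The same lemma shows that every $\Delta$-vertex of $H$ has at least two $\Delta$-neighbours, since $\Delta-d(y)+1\ge 2$ when $d(y)<\Delta$. So you get Fournier's full theorem for free: a graph whose $\Delta$-vertices induce a forest is Class 1. That is the form the paper's wording appeals to in its maximum-degree-$5$ proof, where the degree-$5$ vertices of $G-M$ ``induce an acyclic graph''. The cost is that all the content is outsourced to the adjacency lemma, whose proof is harder than the statement itself.

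\textbf{Your first route has a gap as written.} Take the case where neither endpoint of the deleted edge has degree $\Delta$. Your claim that the fan argument at $v$ then works ``with even more room'' is false. The extra colour missing at $v$ is irrelevant; what the fan argument needs is a missing colour at every fan vertex. Once $d_G(v)<\Delta$, neighbours of $v$ may have degree $\Delta$ and miss nothing, so the fan can be forced to end at such a vertex. The Kempe step, which needs the last fan vertex to miss some $\beta$, then cannot be set up.

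The repair is easy. Always delete an edge $e=uv$ with $d_G(v)=\Delta$, which exists whenever $G$ has an edge, and centre the fan at $v$. Then $G-e$ either still satisfies the hypothesis or has smaller maximum degree (where Vizing's theorem applies), and every fan vertex misses a colour.

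The step you flagged as delicate is not. Vizing's fan-and-Kempe-chain argument uses only a single colour missing at the centre, so it goes through verbatim with the palette $\{1,\dots,\Delta\}$. With this fix, route~1 is a short self-contained proof that avoids the heavy citation.
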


	It is known that Class 1 graphs with maximum degree at most $3$
	have interval colorings, which is sharp with respect to maximum degree
	\cite{AsratianCasselgrenPetrosyan}. In \cite{CasselgrenPetrosyan}, it was proved
	that graphs with maximum degree at most $4$ admit near-interval colorings.
	Moreover, the same holds for Class 1 graphs with maximum degree $5$ and no vertices of degree $3$
	\cite{CasselgrenPetrosyan}.
	
	We prove a corresponding result for weak near-interval colorings of graphs with maximum degree
	$5$. Relaxing the near-interval condition to weak near-interval implies that
	we do not need to restrict to Class 1 graphs, and we can allow some
	vertices of degree three.

\begin{theorem}
	Every graph with maximum degree at most $5$, where no two vertices of degree $3$ are
	adjacent has a weak near-interval coloring.
\end{theorem}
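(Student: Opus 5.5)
The plan is to start from a proper edge coloring $\varphi$ of $G$ with the six colors $\{1,\dots,6\}$, which exists by Vizing's theorem since $\Delta(G)\le 5$. Among all such colorings we choose one minimizing the number of \emph{bad} vertices, i.e.\ vertices at which $\varphi$ is not weakly near-interval. The aim is then to show by local recoloring that a bad vertex can always be eliminated without creating a new one. First classify the possible bad configurations by degree:
\begin{itemize}
\item a vertex of degree $1$ or $5$ is never bad;
\item a vertex of degree $4$ misses exactly two colors, and it is bad only when the missing pair is $\{2,3\}$, $\{3,4\}$ or $\{4,5\}$;
\item a vertex of degree $2$ is bad exactly when its two colors differ by at least $3$;
\item a vertex of degree $3$ is bad exactly when its color set has a gap of length at least $2$, for example $\{1,4,5\}$ or $\{1,2,6\}$.
\end{itemize}
So the good sets form an explicit short list for each degree, and the whole argument is about steering every vertex into that list.

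The basic move is a Kempe change on an $(a,b)$-alternating path. Take a bad vertex $v$ of degree $4$ missing the consecutive pair $\{c,c+1\}$ with $2\le c\le 4$. Pick a color $b\in\{1,6\}$ that is present at $v$ and positioned so that exchanging $b$ with $c$ or with $c+1$ at $v$ leaves $v$ missing a non-consecutive pair or a pair at the end of the range. Then swap along the $(b,c)$-chain starting at $v$; this chain is a path with $v$ as an endpoint. Only its endpoints change their color sets, since interior vertices keep both colors, so the only vertex that can become bad is the other endpoint $w$. We would then use the freedom in choosing among the two or three candidate pairs $(b,c)$ and $(b,c+1)$, and between $b=1$ and $b=6$. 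The point is to show that for at least one choice the far endpoint $w$ stays good, or else is improved as well; if neither works, we would switch to a choice that strictly decreases a secondary potential, such as the total gap length. Vertices of degree $2$ are treated the same way: one of their two colors is swapped along a chain toward the other until the two colors differ by at most $2$.

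The hypothesis that no two degree-$3$ vertices are adjacent enters in the degree-$3$ case. We would handle these vertices separately, either first or through an auxiliary graph. Since the degree-$3$ vertices form an independent set, every edge at a degree-$3$ vertex $u$ has its other end of degree $1,2,4$ or $5$. This lets us prescribe, at $u$, a good target triple such as $\{1,2,3\}$, $\{2,3,4\}$ or $\{1,3,5\}$, and recolor its three edges without two degree-$3$ constraints competing for the same edge. A natural first attempt is to choose the coloring so that every degree-$3$ vertex sees a triple in this list, and only afterwards to repair degree-$4$ and degree-$2$ vertices with chains whose colors avoid disturbing those triples. For this it would be convenient to use chains in colors $\{1,6\}$ against a middle color, since, for instance, $\{1,3,5\}$ and $\{2,4,6\}$ are robust under some such swaps.

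I expect the main obstacle to be the interaction at the far endpoint of a Kempe chain. A swap that fixes a degree-$4$ vertex may create a bad degree-$2$ or degree-$3$ vertex at the other end, and a purely local counting argument may cycle. If the minimal-counterexample potential argument does not close, the fallback is a structural decomposition. One would split $G$ into a subgraph $H$ on which Theorem~\ref{th:fournier} applies, giving a $\Delta$-edge-coloring of $H$ in which every vertex of maximum degree is full, and then insert the remaining edges, for instance a matching, with colors at the ends of the range. Because the degree-$3$ vertices are independent, attaching one extra edge at each of them can be arranged so that no two maximum-degree vertices of the auxiliary graph are adjacent.
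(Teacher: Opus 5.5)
There is a genuine gap: the step that carries your whole argument, namely that some Kempe change inside the palette $\{1,\dots,6\}$ removes a bad vertex without creating a new one, is never proved, and you yourself flag it as the expected obstacle. The obstacle is real. For a bad degree-$4$ vertex $v$ with colors $\{1,2,5,6\}$, the $(1,3)$-chain from $v$ may end at a good degree-$2$ vertex with colors $\{3,5\}$, which becomes the bad set $\{1,5\}$. Nothing in your setup prevents every candidate chain $(1,3),(1,4),(6,3),(6,4)$ from ending at such a vertex. In that case the number of bad vertices does not drop, and the ``secondary potential'' is not specified precisely enough to check that it decreases.

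The degree-$3$ step has the same defect. Independence of the degree-$3$ vertices only means that no edge is constrained from both ends by a degree-$3$ vertex. Recoloring the three edges at $u$ to a prescribed triple must still stay proper at, and keep good, the neighbors of $u$, whose other edges restrict the available colors. Insisting on six colors is also self-imposed: a weak near-interval coloring inside $\{1,\dots,6\}$ asks for more than the theorem, and this restriction is exactly what forces chain propagation. Your fallback, a matching together with Fournier's theorem, is the right starting point, but it stops where the real work begins.

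The paper first takes a maximum matching $M$ in the subgraph induced by the degree-$5$ vertices. It $5$-edge-colors $G-M$ by Fournier's theorem (Theorem~\ref{th:fournier}), which applies since the unmatched degree-$5$ vertices are independent, and gives $M$ color $6$. Then vertices of degree $4$ or $5$ are never problematic. With an arbitrary Vizing coloring, as in your proposal, degree-$4$ vertices missing $\{2,3\}$, $\{3,4\}$ or $\{4,5\}$ are an extra source of trouble. Under the paper's coloring the only problematic sets are $\{1,4\},\{1,5\},\{2,5\}$ at degree $2$ and $\{1,4,5\},\{1,2,5\}$ at degree $3$.

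Instead of Kempe swaps, the paper recolors individual bad edges, i.e.\ those colored $1$ or $5$ at a problematic vertex. When necessary it uses colors outside the initial range: $0$ and $6$ for degree-$2$ vertices, then also $-1$ and $7$ for degree-$3$ vertices. Such out-of-range colors are only placed at endpoints of degree at least $4$ that already carry colors near that end of their range, so these endpoints stay weakly near-interval. The following invariants keep these moves compatible:
\begin{itemize}
\item colors $0,6$ appear only on edges with an endpoint of degree at least $4$ and weakly near-interval at both ends;
\item no vertex sees exactly $\{0,1,2,3\}$ or $\{3,4,5,6\}$;
\item colors $-1,7$ appear only on edges not adjacent to bad edges.
\end{itemize}
The hypothesis on degree-$3$ vertices is used precisely to ensure that the other endpoint $x$ of a bad edge at a problematic degree-$3$ vertex has degree at least $4$. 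Together with the invariants, this excludes Case~1.2 of the second claim. The only propagation ever needed is along a $(1,5)$-alternating path through problematic degree-$3$ vertices, which is recolored with $-1,0,2,6$.
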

\begin{proof}
	Let $G$ be a graph as in the theorem;
	we color it as follows. Let $M$ be a maximum matching in the subgraph of $G$ induced
	by all vertices of degree $5$. Then $G-M$ satisfies that the vertices of degree $5$ induce
	an acyclic graph, so it has a proper $5$-edge coloring. We color the edges of $M$ by color $6$.
	This yields a proper $6$-edge coloring $\varphi$ of $G$, where only vertices of degree $5$ are incident with
	edges of color $6$.

	A vertex $v$ of $G$ where $\varphi$ is not weakly near-interval, we call {\em $\varphi$-problematic} (or just {\em problematic}).
	
	Straightforward case analysis yields that if $\varphi$ is problematic at a vertex $v$, then
\begin{itemize}

	\item $\varphi(v)=\{1,4\}$, $f(v)=\{1,5\}$, $f(v)=\{2,5\}$, if $v$ has degree $2$

	\item $\varphi(v)=\{1,4,5\}$, $f(v)=\{1,2,5\}$, if $v$ has degree $3$.

\end{itemize}

	An edge of color $1$ or $5$ that is incident with a problematic vertex is called {\em bad}.
	
	Using the coloring $\varphi$ as starting point, we shall prove that $G$ has a weak near-interval
	coloring by steps. We shall use colors $-1, \dots, 7$ in the process of constructing such a coloring.

	We first prove the following claim.

\begin{claim}
\label{cl:prob2}
	There is a proper edge coloring $g$ of $G$ with colors $0, \dots, 6$ such that
\begin{itemize}
	\item[(a)] no vertices of degree $2$ are problematic

	\item[(b)] colors $0$ and $6$ only appear on edges $e$ where one endpoint has degree at least $4$, and $g$
	is weakly near-interval at both endpoints of $e$,

	\item[(c)] no vertex is incident with exactly four edges that are colored $0,1,2,3$ or $3,4,5,6$.
\end{itemize}
\end{claim}


\begin{proof}	
	Let us assume that $g$ is a proper edge coloring, obtained from $\varphi$ 
	by recoloring some bad edges incident with vertices of degree $2$,
	with the fewest problematic vertices of degree $2$.
	Let us first note that if two problematic vertices are adjacent, then we can easily obtain a proper coloring
	with fewer problematic vertices, a contradiction. Thus we assume that this is not the case.

	Assuming that there is at least one problematic vertex of degree $2$ under $g$,
	we shall argue that there is a coloring $g'$ using colors $0, \dots, 6$ satisfying (a)-(c) and with
	fewer problematic vertices of degree $2$.

\bigskip

	{\bf Case 1.} {\em There is a problematic vertex $v$ which is incident with edges colored $1$ and $4$:}

	Suppose that $uv$ is colored $1$ under $f$, and that $vw$ is colored $4$. If color $2$ or $3$ does
	not appear at $u$, then we can recolor $uv$ by color $2$ or $3$ to obtain a coloring with fewer
	problematic vertices, so we assume that this is the case; analogously for $w$. Similary, color $5$
	must appear at $u$. If color $6$ does not appear at $u$, then we recolor $uv$ by color $6$.

	Otherwise, assume that color $6$ appears at $u$, so $g(u)=\{1,2,3,5,6\}$. If $5$ does not appear
	at $w$, then we recolor $uv$ by $4$ and $vw$ by $5$, so we may assume that this holds. Now, if
	$6$ does not appear at $w$, then we recolor $uv$ by $4$ and $vw$ by $6$, so we assume that
	$g(w) = \{2,3,4,5,6\}$. Hence, we can recolor $vw$ by the color $0$ to obtain a coloring
	with fewer problematic vertices, and where $v$ is not problematic.

	\bigskip

	{\bf Case 2.}	 {\em There is a problematic vertex $v$ which is incident with edges colored $2$ and $5$:}
	
	This case is completely analogous to the preceding case, where every color $c$ is replaced by $6-c$.

	\bigskip

	{\bf Case 3.}	 {\em There is a problematic vertex $v$ which is incident with edges colored $1$ and $5$:}
	
	Suppose $uv$ is colored $1$, and $vw$ is colored $5$. As in the preceding cases, we may assume that
	colors $3,4$ both appear at $u$, and colors $2,3$ both appear at $w$. Furthermore, we may
	assume that $4$ appears at $w$, and $2$ appears at $u$, because otherwise we may recolor $uv$
	or $vw$ and proceed as in Case 1 or Case 2. If $0$ does not appear at $w$, then we may recolor $vw$
	by this color, so we may assume that $0$ appears at $w$; similarly with color $6$ at $u$.
	Thus we may recolor $uv$ by $0$, and $vw$ by color $1$, to obtain a coloring with fewer
	problematic vertices.

	\bigskip
	
	This shows that there is a coloring $g'$ as required, which contradicts the choice of $g$. 
\end{proof}

	From the preceding claim we infer that
	there is a proper edge coloring $g$ of $G$ using colors $0,\dots, 6$ such that
\begin{itemize}

	\item $g(v)=\{1,4,5\}$ or $g(v)=\{1,2,5\}$, if $v$ is problematic under $g$,

	\item colors $0$ and $6$ only appear on edges $e$ where one endpoint has degree at least $4$, and $g$
	is weakly near-interval at both endpoints of $e$,

	\item no vertex is incident with exactly four edges that are colored $0,1,2,3$ or $3,4,5,6$.

\end{itemize}

	Next, we prove the following claim.

\begin{claim}
\label{cl:prob3}
	There is a proper edge coloring $f$ of $G$ with colors $-1, \dots, 7$ such that
\begin{itemize}
	\item[(i)] no vertices of $G$ are problematic

	\item[(ii)] colors $0$ and $6$ only appear on edges $e$ with one endpoint of degree at least $4$, 
	and $f$ is weakly near-interval at both endpoints of $e$, 

	\item[(iii)] no vertex is incident with exactly four edges that are colored $0,1,2,3$ or $3,4,5,6$.

	\item[(iv)] colors $-1$ and $7$ only  appear on edges that are not adjacent to bad edges.
\end{itemize}
\end{claim}
\begin{proof}
	Among the colorings satisfying (ii)-(iv), and with no problematic vertices of degree
	$2,4$ or $5$,
	we choose one $f$ with the minimum number
	of problematic vertices of degree $3$. Assuming that at least one vertex is problematic under $f$,
	we shall prove that there is a coloring $f'$ with fewer problematic vertices of degree $3$ 
	satisfying (ii)-(iv) (and with no problematic vertices of degree $2$, $4$, or $5$), 
	thereby deriving a contradiction.

	
	\bigskip

	{\bf Case 1.}	 {\em There is a problematic vertex $v$ with $f(v) =\{1,2, 5\}$:}

	Suppose that $uv$ is colored $1$, $vw$ is colored $2$, and $vx$ is colored $5$. As before,
	we may assume that $u,w$ and $x$ are all incident with an edge colored $3$. Moreover, color $4$
	appears at both $u$ and $x$.

	\bigskip

	{\bf Case 1.1.} {\em Color $1$ or $2$ appears at $x$:}

	Suppose that color $1$ or $2$ appears at $x$. If color $0$ does not appear at $x$, then we recolor
	$vx$ by $0$ and are done. So suppose that $0$ appears at $x$. If color $2$ appears at $x$, then
	we recolor $vx$ by $-1$. Otherwise, if $1$ appears at $x$, then we recolor $xv$ by $-1$ unless
	$x$ is adjacent to a problematic vertex $y$ via the edge colored $1$.
	
	So assume that there is such a problematic vertex $y$.
	If $f(y) = \{1,4,5\}$, then we recolor $xy$ by $2$ and $vx$ by $-1$, and are done.
	Otherwise if $f(y) = \{1,2,5\}$, then we set $v_1 = v$, $x_1 = x$, and consider a maximal
	$(1,5)$-colored path $P = v_1 x_1 v_2 x_2 \dots v_k x_k$ satisfying that
\begin{itemize}

	\item $v_1, \dots v_k,$ are problematic

	\item colors $f(v_i)=\{1,2,5\}$, $i=1,\dots, k-1$

	\item colors $f(v_k)=\{1,2,5\}$ or $f(v_k)=\{1,4,5\}$.

\end{itemize}	
	Note that we may assume that $\{3,4\} \subseteq f(x_i)$,  $i=1,\dots k$, 
	since otherwise we can recolor $v_i x_i$
	and obtain a coloring with fewer problematic vertices.

	Now we recolor the edges $v_1x_1, \dots v_k x_k$ as follows:

\begin{itemize}

	\item If $0$ appears at $x_i$, then we recolor $v_i x_i$ by color $-1$, otherwise 
	we recolor it by $0$, for $i=1,\dots, k-1$.
	
	\item If $1,4,5$ appear at $v_k$ and $2$ does not appear at $x_{k-1}$, then we recolor $x_{k-1}v_k$
 	by $2$. On the other
	hand, if $2$ appears at $x_{k-1}$, then we recolor it $6$.

	\item If $1,2,5$ appear at $v_k$, then since $x_5$ has degree at least $4$, and 
	$f(x_k) \neq \{3,4,5,6\}$, $1$ or $2$ must appear at $x_5$.
	If $0$ does not appear at $x_k$, then we recolor $v_kx_k$ by $0$;
	otherwise if $0$ does appear at $x_k$, then we recolor $x_k v_k$ by $-1$.
\end{itemize}

	It is readily verified that this yields a required coloring $f'$, which contradicts the choice of $f$.

	\bigskip

	{\bf Case 1.2.} {\em Neither of color $1$ and $2$ appear at $x$:}

	Note that the conditions (i)-(iv) imply that $0$ does not appear at $x$. 
	Moreover, since $x$ does not have degree $3$, $6$ must appear at $x$.
	By the same conditions, $7$ cannot appear at $x$, so
	$x$ has degree $4$ and colors $3,4,5,6$ are present at $x$.
	However, this contradicts the condition (iii) above.
	We conclude that Case 1.2 is not possible.

	\bigskip

	{\bf Case 2.}	 {\em There is a problematic vertex $v$ with $f(v) = \{1,4,5\}$:}

	This case is completely analogous to Case 1 by replacing every color $c$ in that case by $6-c$.
\end{proof}	
\bigskip

	By the preceding claim we may conclude that there is a coloring satisfying (i)-(iv) with no problematic
	vertices of degree $3$. This is the required weak near-interval coloring.
\end{proof}

	Allowing adjacent vertices of degree $3$ in the above proof would introduce significant
	difficulties. One difficult configuration is when three edges colored $1,2,5$ are incident
	to a common vertex, and all three other endpoints of these edges have degree $3$.
	If all these endpoints are incident with edges colored $3$ and $4$, we would have to
	recolor a quite large subgraph and consider a large number of different cases.

Finally, let us note that Vizing's edge coloring theorem implies that all regular graphs have local deficiency at most $1$.
More generally, it seems that graphs $G$ whose vertex degrees are not too far apart have small local deficiency; indeed, we have the following.

\begin{proposition}
\label{th:D-d<=1}
If $G$ is a graph with $\Delta(G)-\delta(G)\leq 1$, then $G$ has  local deficiency at most $1$.	
\end{proposition}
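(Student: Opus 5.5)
The plan is to start from a Vizing coloring and then adjust it by Kempe-chain interchanges, arguing by extremality.

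\textbf{Reduction.} If $\Delta(G)=\delta(G)$, then $G$ is regular, and the remark preceding the proposition applies: a proper $(\Delta+1)$-edge coloring by Vizing's theorem leaves exactly one color of $\{1,\dots,\Delta+1\}$ missing at every vertex, so every $\varphi(v)$ is a near-interval. So assume $\Delta(G)=d+1$ and $\delta(G)=d$. Local deficiency at most $1$ is the same as having a near-interval coloring, since the colors of the added pendant edges can be chosen freely.

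\textbf{Setup.} Take a proper edge coloring $\varphi$ of $G$ with colors $1,\dots,d+2$, which exists by Vizing's theorem.
\begin{itemize}
\item Every vertex of degree $d+1$ misses exactly one color of $\{1,\dots,d+2\}$, so $\varphi$ is near-interval there no matter what.
\item A vertex $v$ of degree $d$ misses exactly two colors $a<b$. Then $\varphi(v)$ is a near-interval precisely when $a=1$, or $b=d+2$, or $b=a+1$.
\end{itemize}
Call $v$ \emph{bad} otherwise. Among all proper $(d+2)$-edge colorings, choose $\varphi$ with the fewest bad vertices. The goal is to show that this minimum is $0$.

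\textbf{Recoloring step.} Suppose $v$ is bad, missing $a,b$ with $1<a<a+1<b<d+2$. Then color $1$ is present at $v$. Consider the $(1,a)$-Kempe chain $P$ starting at $v$; it is a path, since $v$ misses $a$. Interchanging $1$ and $a$ on $P$ changes the missing set at $v$ to $\{1,b\}$, so $v$ becomes good. Interior vertices of $P$ keep their color sets, so the only other vertex affected is the far endpoint $w$ of $P$.
\begin{itemize}
\item If $w$ has degree $d+1$, it stays good automatically.
\item If $w$ has degree $d$ and the last edge of $P$ has color $1$, then after the swap $w$ misses $1$ and is good.
\end{itemize}
In both situations the number of bad vertices drops, contradicting the choice of $\varphi$.

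\textbf{Main obstacle.} The remaining case is that $w$ has degree $d$, the last edge of $P$ has color $a$, and $w$ was good only because it missed $1$; after the swap it might miss $\{a,c\}$ with both interior. I would handle this in three steps.
\begin{enumerate}
\item Try the symmetric chain: the $(d+2,b)$-chain from $v$, whose interchange makes $v$ miss $d+2$. It fails only in the mirror-image situation, with endpoint $w'$ previously good only by missing $d+2$.
\item If both chains fail, try the chains $(1,b)$ and $(d+2,a)$ as well. Alternatively, apply the symmetry $c\mapsto d+3-c$ together with a case analysis on the second missing color of $w$ and $w'$. Note that if $w$ missed $\{1,2\}$ then after the swap it misses $\{a,2\}$, which is consecutive only when $a=3$.
\item If this case analysis does not close, switch to a weighted potential instead of the plain count of bad vertices. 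For example, minimize the sum over degree-$d$ vertices of $\min(a_v-1,\,d+2-b_v)$, and check that the $(1,a)$ interchange at a bad vertex strictly decreases this potential even when $w$ is spoiled.
\end{enumerate}
Finding the right potential for step 3 is where I expect the real work to lie. If it succeeds, a coloring with no bad vertices exists, which is a near-interval coloring of $G$, so $G$ has local deficiency at most $1$.
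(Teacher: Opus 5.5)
\paragraph{There is a genuine gap.}
Your reduction to $\Delta=d+1$, $\delta=d$ is correct, and so is your description of the bad degree-$d$ vertices. The easy cases of the $(1,a)$-interchange are also handled correctly. But the proof stops exactly where the difficulty lies: when the far end $w$ of the chain is a degree-$d$ vertex that was good only because it missed $1$, nothing in steps 1--3 closes the argument.

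Steps 1 and 2 give no reason why the four chains $(1,a)$, $(d+2,b)$, $(1,b)$, $(d+2,a)$ cannot all spoil their endpoints at once. The potential in step 3 also does not strictly decrease. For instance, with $d+2=10$:
\begin{itemize}
\item let $v$ miss $\{3,5\}$, so its term is $2$;
\item let the $(1,3)$-chain end at a vertex $w$ missing $\{1,6\}$;
\item after the interchange $w$ misses $\{3,6\}$, again with term $2$.
\end{itemize}
So the sum is unchanged, and $w$ is now bad. In general, the new term at $w$ is at most $a-1$, and symmetrically at most $d+2-b$ for the $(d+2,b)$-chain. Hence even choosing the better of the two chains only guarantees that the potential does not increase, not that it drops. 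The procedure may simply move the bad vertex around indefinitely.

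\paragraph{The missing idea.}
Construct the coloring so that the extra color never appears at a degree-$d$ vertex, rather than repairing an arbitrary Vizing coloring. This is what the paper does:
\begin{itemize}
\item Take a maximum matching $M$ in the subgraph induced by the vertices of degree $d+1$, and color its edges $d+2$.
\item By maximality of $M$, the vertices that still have degree $d+1$ in $G-M$ are pairwise nonadjacent.
\item Hence $G-M$ is $(d+1)$-edge-colorable, by Fournier's theorem (Theorem~\ref{th:fournier}), or by Vizing's theorem if $\Delta(G-M)\le d$.
\end{itemize}
Now look at the combined coloring with colors $1,\ldots,d+2$. A degree-$d$ vertex is not incident with $M$, so it misses $d+2$ and exactly one color of $\{1,\ldots,d+1\}$; this is your condition $b=d+2$. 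A degree-$(d+1)$ vertex misses exactly one color of $\{1,\ldots,d+2\}$, as in any proper $(d+2)$-edge coloring. So the coloring is near-interval with no recoloring at all.

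This coloring is in fact a zero of your step-3 potential. What your plan lacks is a mechanism for reaching such a coloring, and the matching-plus-Fournier construction supplies it directly.
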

This theorem can be proved by removing a maximum matching in the subgraph of $G$ induced by the vertices of maximum degree $\Delta(G)$,
and coloring its edges by color $\Delta(G)+1$.
The remaining graphs is $\Delta(G)$-edge-colorable (by Vizing's theorem or by Theorem \ref{th:fournier}). Thus $G$ has a near-interval coloring.


%


\section{Complete multipartite graphs}

	In this section we prove that complete multipartite graphs have
 	small weak local deficiency. Complete bipartite graphs are well-known
	to have interval colorings, while there are complete tripartite
	graphs that do not admit such colorings (since they are Class 2).
	We prove the following.

\begin{theorem}
\label{prop:3partite}
	Every complete $3$-partite graph has weak local deficiency $1$.
\end{theorem}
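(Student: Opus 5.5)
Since complete bipartite graphs are interval colorable and some complete tripartite graphs are Class~2 (hence not interval colorable), I read the statement as asserting $\mathrm{def}_{wloc}(K_{p,q,r})\le 1$, with equality in the non-interval-colorable cases. I would prove the upper bound by an explicit coloring. Write the parts as $X=\{x_1,\dots,x_p\}$, $Y=\{y_1,\dots,y_q\}$, $Z=\{z_1,\dots,z_r\}$, where I expect to choose the ordering of $p,q,r$ (probably $p\le q\le r$, or the largest part in the middle) to suit the construction.

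The guiding idea is that a weak near-interval may have gaps of size $1$ anywhere. So instead of making each of the three bipartite subgraphs $K_{p,q}$, $K_{p,r}$, $K_{q,r}$ interval colored with consecutive colors, I would \emph{interleave} them by parity.
\begin{itemize}
\item Color $K_{X,Z}$ with the odd colors $2(i+k)+1$ on $x_iz_k$. This is the standard interval coloring $i+k$ of $K_{p,r}$, stretched by a factor of $2$.
\item Color $K_{X,Y}$ with the even colors $2(i+j)+c$ on $x_iy_j$, for a constant $c$.
\item Color $K_{Y,Z}$ with the odd colors $2(j+k)+d$ on $y_jz_k$, for a constant $d$.
\end{itemize}
Since $(i,k)\mapsto i+k$ is proper on $K_{p,r}$, each of the three pieces is proper. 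Parity separates the even class from both odd classes, so the only possible conflict is at a vertex $z_k$, between the two odd classes.

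The constants $c,d$ would be chosen so that the three vertex types behave as follows.
\begin{itemize}
\item At each $z_k$, the odd colors coming from $X$, namely $2k+3,\dots,2(p+k)+1$, and those coming from $Y$ form one run of consecutive odd numbers. This needs $d=2p+1$, up to shifting the run by one step so that it is disjoint from the $X$-run. The set $f(z_k)$ is then a set of consecutive odd integers, which is a weak near-interval.
\item At each $x_i$, the even run (of length $q$) and the odd run (of length $r$) start at essentially the same point. Their union is then an interval with isolated one-element gaps, which occur only after the shorter run ends.
\item At each $y_j$, the even run from $X$ (of length $p$) and the odd run from $Z$ (of length $r$) must likewise interlace or abut, with no gap of size $2$.
\end{itemize}
Checking the first two conditions is routine once $c,d$ are fixed.

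The main obstacle is making $y_j$ and $z_k$ work simultaneously. Forcing the $Y$-run at $z_k$ to continue the $X$-run pushes the $Z$-colors at $y_j$ to start about $2p$ above $2j$, which is exactly where the $X$-colors at $y_j$ stop. A naive choice then leaves a gap of size $2$ at $y_j$ or creates a collision at $z_k$. To repair this, I would first try the following adjustments, in this order:
\begin{itemize}
\item swapping the roles of the parities, i.e.\ putting $K_{X,Y}$ on odd colors and both $Z$-bipartite graphs on even colors;
\item reversing the index order on one side, for instance using $y_{q+1-j}$, so that the $X$-run at $z_k$ increases while the $Y$-run approaches it from the other end;
\item relabeling which part plays the role of $Z$.
\end{itemize}
If no single affine formula works for all $p,q,r$, I would split into cases by the parity of $p$ (or by whether $p<q$), and resolve the remaining boundary vertex by a local recoloring with one extra color, in the spirit of the recolorings used for Claims~\ref{cl:prob2} and~\ref{cl:prob3}.
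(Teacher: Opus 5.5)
Your setup is the paper's construction, but the proposal does not finish the proof. With $d=2p+1$ the edge $y_jz_k$ gets $2((p+j)+k)+1$. So $K_{X,Z}\cup K_{Y,Z}$ is the stretched interval coloring of $K_{p+q,r}$ with $Y$ indexed after $X$, and $K_{X,Y}$ sits on the even colors. The paper's coloring, $2(i+j-1)$ on $u_iv_j$ and $2\gamma-1$ with $\gamma(x_iw_k)=i+k-1$, is exactly yours with $c=2$, shifted down by $4$. The gap is that you never fix $c$, and the ``main obstacle'' you describe at $y_j$ does not exist. You then defer the argument to repairs (parity swaps, reindexing, case splits, recoloring with extra colors) that are neither needed nor verified.

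What you missed is that $c$ only shifts the even colors on $X$--$Y$ edges, and these never occur at any $z_k$. So $c$ cannot cause a collision at $z_k$; only $d$ could, and $d$ is already fixed. You are therefore free to tune $c$ so that the even run at $y_j$ ends just where the odd run begins. The gap of size $2$ at $y_j$ that you anticipated occurs only for $c\le 0$. Take $c=2$. Then:
\begin{itemize}
\item $f(y_j)=\{2j+4,2j+6,\dots,2p+2j+2\}\cup\{2p+2j+3,2p+2j+5,\dots,2p+2j+2r+1\}$, and the two runs abut at $2p+2j+2,\,2p+2j+3$.
\item $f(x_i)=\{2i+3,2i+5,\dots,2i+2r+1\}\cup\{2i+4,2i+6,\dots,2i+2q+2\}$, and the two runs interleave from the start.
\item $f(z_k)$ is the set of all odd integers from $2k+3$ to $2(p+q+k)+1$.
\end{itemize}
Every gap has size at most $1$. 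Properness follows from parity together with the disjointness of the two odd runs at $z_k$. This completes the proof with no ordering of $p,q,r$ and no case analysis. The paper assumes $a\ge b\ge c$, but this check shows the assumption is not used.
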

\begin{proof}
	Let $G=K_{a,b,c}$ be a tripartite graph where with parts $A,B,C$ of sizes $a,b,c$, respectively, 
	where $a \geq b \geq c$. Let $A=\{u_1, \dots, u_a\}$ and $B=\{v_1,\dots, v_b\}$.
	Define an interval edge coloring of the copy of $K_{a,b}$ contained in $G$
	by setting $\varphi(u_iv_j) = i+j-1$. 
	Next, multiply each color by $2$, so
	the obtained coloring $\varphi'$ is a weak near-interval coloring using colors $\{2,\dots, 2a+2b-2\}$.

	Next, we color the edges of the subgraph $H$ in $G$ that is isomorphic to $K_{a+b,c}$. 
	Note that the smallest color that appears at $u_i$ under $\varphi'$ is $2i$, $i=1,\dots,a$,
	and similarly, the smallest color appearing at $v_i$ is $2i$.
	Let $C= \{w_1,\dots, w_c\}$, 
	and let $X=\{x_1, \dots x_{a+b}\}$,
	where $x_i = u_i$, $i=1\dots, a$ and $x_{a+i} = v_i$, $i=1\dots, b$.
	
	Let $\gamma$ be an interval edge coloring of the complete bipartite graph $K$ with 
	parts $X$ and $C$ contained in $G$
	defined by $\gamma(x_i w_j) = i+j-1$,
	and set $\gamma'(e) = 2 \gamma(e)-1$ for every edge $e \in E(K)$.

	Let us verify that $\varphi'$ and $\gamma'$ together form an weak near-interval coloring of $G$.
	It is clear that both $\varphi'$ and $\gamma'$ are weakly near-interval at every vertex of $G$, so
	the coloring is weakly near-interval at vertices in $C$.
	Moreover, it is straightforward that $\varphi'$ and $\gamma'$ is weakly near-interval at vertices in $A$.

	Let us now consider the vertices in $B$.
	The largest color appearing at $v_i$ under $\varphi'$ is $2a+2i-2$, and the smallest color
	appearing at $v_i$ under $\gamma'$ is $2(a+1) + 2i-2 -1= 2a +2i-1$. 
	Consequently, the coloring is weakly near-interval at
	the vertices in $B$.
\end{proof}

More generally we have the following.

\begin{theorem}
	If $G$ is a complete multipartite graph with a unique part of largest size,
	then $G$ has weak local deficiency $1$.
\end{theorem}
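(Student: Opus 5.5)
The plan is to generalize the parity trick from the proof of Theorem~\ref{prop:3partite}. Let the parts of $G$ be $A,P_2,\dots,P_k$, where $A=\{u_1,\dots,u_a\}$ is the unique largest part. Let $H=G-A$, which is the complete multipartite graph on $P_2,\dots,P_k$. I order the vertices of $V(H)$ as $x_1,\dots,x_m$ so that each part $P_j$ occupies a contiguous block. The edges of the complete bipartite graph between $A$ and $V(H)$ get odd colors, $\gamma'(u_ix_j)=2(i+j)-3$. This is the doubled interval coloring of $K_{a,m}$, so at every vertex it gives all odd numbers in a window: at $u_i$ the window is $\{2i-1,\dots,2(i+m)-3\}$, and at $x_j$ it is $\{2j-1,\dots,2(a+j)-3\}$. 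Vertices of $A$ are then already weakly near-interval, since their colors are all odd and consecutive odd numbers leave gaps of size $1$. The remaining task is to color $E(H)$ with even colors, so that no gap of size two ever appears at any $x_j$.

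\textbf{Key step.} I will find a proper coloring $\psi$ of $E(H)$ with integers such that the set $2\psi(x_j)$ is contained in $\{2j-2,\dots,2(a+j)-2\}$ for every $j$. If this holds, every even color at $x_j$ lies in or immediately beside its odd window. The union of the colors at $x_j$ then consists of all odd numbers of the window together with some even numbers inside it or at its two ends, which is a weak near-interval. Equivalently, $\psi(x_j)\subseteq\{j-1,\dots,a+j-1\}$, a window of $a+1$ consecutive colors. It must contain $d_H(x_j)=m-|P(x_j)|$ distinct colors, where $P(x_j)$ is the part containing $x_j$.

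\textbf{Main obstacle.} The hard part is showing that such a window-constrained proper coloring of $H$ exists. The windows $[j-1,a+j-1]$ slide with the index $j$, and the edge $x_jx_l$ must receive a color in the intersection $[\max(j,l)-1,\min(j,l)+a-1]$. This intersection is nonempty only when $|j-l|\le a$. So the orderings within $V(H)$ and the size of $a$ matter, and we need $a$ large relative to $m$ for this to be possible at all. When $a$ is small compared with $m$, I will instead apply the construction recursively. I split the parts into groups, give each group's internal edges even colors in suitably shifted windows, and give the cross edges the same doubled bipartite pattern, as in the tripartite proof, where $A\cup B$ was handled first and $C$ afterwards.

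\textbf{Role of uniqueness.} I expect the uniqueness of the largest part to be exactly what supplies the one unit of slack needed here. Each $x_j$ has $d_H(x_j)\le m-1$, and the window has size $a+1$. The edge $x_jx_l$ can be colored $\lfloor (j+l)/2\rfloor + c$, with a small fixed offset $c$ and a tie-breaking perturbation inside each block so that the coloring is proper. Because $|P(x_j)|\le a-1$, the non-neighbors of $x_j$ within its own block free up enough room that the colors at $x_j$ fit into the window. I will try this explicit ``$\lfloor(j+l)/2\rfloor$''-type formula first and verify properness and the window condition by a direct computation. If that fails, I fall back to the recursive grouping, using the inequality $a\ge |P_j|+1$ to align the windows.
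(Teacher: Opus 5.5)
Your reduction is sound: if a proper $\psi$ on $E(H)$ with $\psi(x_j)\subseteq\{j-1,\dots,a+j-1\}$ exists, the combined coloring is weakly near-interval. The gap is that this key step is false in general, and it fails exactly in the main case. The window has $a+1$ slots, but $x_j$ needs $d_H(x_j)=m-|P(x_j)|$ of them, and nothing bounds $m$ in terms of $a$. For $G=K_{2,1,1,1,1,1}$ (so $a=2$, $H=K_5$), every $x_j$ must fit $4$ colors into $3$ slots. Even when the count fits, the sliding windows can fail: in $K_{2,1,1,1,1}$, under every ordering the edge $x_1x_4$ has admissible range $[3,2]=\emptyset$.

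Uniqueness only gives $|P(x_j)|\le a-1$, that is, the lower bound $d_H(x_j)\ge m-a+1$. So your ``role of uniqueness'' paragraph supplies no slack. The formula $\lfloor (j+l)/2\rfloor+c$ is also far from proper, not one perturbation away: it gives $x_jx_l$ and $x_jx_{l+1}$ the same color whenever $j+l$ is even. The fallback ``recursive grouping'' is precisely the unproved part. The tripartite proof spends its two parity classes on exactly two layers ($K_{a,b}$ even, then $K_{a+b,c}$ odd). After that, every vertex of $A\cup B$ already sees both parities, so there is no evident third layer.

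What your construction does prove is the case $m\le a+1$, even without uniqueness. There $\psi(x_jx_l)=j+l-2$ is proper and lies in both windows, since $j+l-2\le\min(j,l)+a-1$ if and only if $\max(j,l)\le a+1$.

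The missing idea, which is the paper's route, is to reverse the roles: $H$ carries the backbone and the cross edges fill its holes.
\begin{enumerate}
\item Color $H$ cyclically modulo $n=|V(H)|$. The paper uses $\alpha(v_iv_j)=i+j-1 \pmod n$, so that each $\alpha(v)$ is a cyclic interval of $\mathbb{Z}_n$ missing exactly $|P(v)|$ residues.
\item Color the cross edges by $\gamma(v_ix_j)=i-j \pmod n$. Then each $x_j$ sees all of $\mathbb{Z}_n$, and each $v_i$ sees the cyclic interval of length $a$ ending just before the first color of $\alpha(v_i)$.
\item This interval covers the $|P(v_i)|$ missing residues. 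Because $|P(v_i)|<a$, it also overlaps $\alpha(v_i)$ in at least one residue.
\item Set $\alpha'=2\alpha-1$ and $\gamma'=2\gamma$. Each residue $c$ then contributes $2c-1$ or $2c$ (or both).
\end{enumerate}
A gap of size $2$ can only come from an $\alpha$-only residue immediately followed by a $\gamma$-only residue, and the overlap rules this out. That is exactly where uniqueness enters. The relevant window now has size $n$, which matches $d_H$, so many parts cause no trouble.

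The paper only sketches this argument, and some points need care. For instance, $\gamma$ is proper only when $a\le n$, and the labeling that makes $\alpha(v_i)$ start at $i$ needs justification. So your $m\le a+1$ case is a useful complement, not wasted effort.
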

\begin{proof} 
	Suppose that $X$ is a part of largest size in $G$ and consider the complete multipartite graph
	$H=G-X$. Label the vertices of $H$ by $\{v_1,\dots, v_n\}$ and the vertices of $X$ by
	$\{x_1,\dots, x_t\}$.
	We color the edges of $H$ by setting $\alpha(v_i v_j) = i+j-1 \pmod n$. Then $\alpha(v)$
	is a cyclic interval modulo $n$ for each $v \in V(H)$.
	Moreover, if we consider colors as
	consecutive modulo $n$, then every color $i$, $1\leq i \leq n$, is the ``first'' color of such a
	cyclic interval modulo $n$ appearing at some vertex of $H$ (in the sense that such a ``first
	color'' does not have a consecutive predecessor). 
	Without loss of generality, we assume that the colors used by $\alpha$
	are permuted cyclically modulo $n$ so that $i$ is the ``first'' such color appearing at $v_i$. Note that
	since $X$ is a largest part in $G$, strictly less than $t$ colors from
	$\{1,\dots, n\}$ are missing at every vertex of $H$ under $\alpha$.
	
	Next, we color the complete bipartite graph with parts $X$ and $V(H)$ by setting
	$\gamma(v_ix_j) = i-j \pmod n$. 
	By construction the colorings $\gamma$ and $\alpha$ together
	form an improper interval edge coloring of $G$. Next, we define
	two new proper edge colorings $\gamma'(e) = 2\gamma(e)$ and
	$\alpha'(e) = 2\alpha(e)-1$. Since $\gamma$ and $\alpha$ together form an improper interval edge coloring, 
	and strictly less than $t$ colors from
	$\{1,\dots, n\}$ are missing at every vertex of $H$ under $\alpha$,
	the colorings $\alpha'$ and $\gamma'$ together form a weak near-interval coloring.
\end{proof}

The following upper bound applies to all complete multipartite graphs.

\begin{theorem}
	Every complete multipartite graphs has weak local deficiency at most $2$.
\end{theorem}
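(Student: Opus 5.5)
The plan is to repeat the construction of the preceding theorem without assuming that the largest part is unique, and to accept gaps of length $2$ instead of $1$.

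Let $X=\{x_1,\dots,x_t\}$ be a part of largest size and let $H=G-X$, with vertices $v_1,\dots,v_n$. As before, set $\alpha(v_iv_j)=i+j-1 \pmod n$ on $E(H)$. Each $\alpha(v)$ is then a cyclic interval modulo $n$. After a cyclic relabelling of colors, the ``first'' color at $v_i$ is $i$.

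If $v_i$ lies in a part of size $s\le t$, then $\alpha(v_i)$ is the cyclic interval of length $n-s$ starting at $i$. On the edges between $X$ and $V(H)$, set $\gamma(v_ix_j)=i-j \pmod n$. Then $\gamma(v_i)$ is the cyclic interval of length $t$ ending at $i-1$, so $\alpha(v_i)\cup\gamma(v_i)$ is all of $\mathbb{Z}_n$. At each $x_j$ the colors $\gamma(x_j)$ are all residues modulo $n$, each used once.

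As in the previous proof, pass to proper colorings by setting $\alpha'(e)=2\alpha(e)-1$ and $\gamma'(e)=2\gamma(e)$, taking representatives in $\{1,\dots,n\}$. The two cases behave differently.

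\begin{itemize}
\item If $s<t$, the two cyclic intervals overlap. The odd and even colors then interleave without gaps larger than $1$, exactly as in the previous theorem.
\item If $s=t$ (a vertex in another largest part), $\alpha(v_i)$ and $\gamma(v_i)$ partition $\mathbb{Z}_n$ into two complementary cyclic arcs. After doubling, on one arc only odd colors occur and on the other only even colors occur.
\end{itemize}

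Within each arc the gaps have size $1$. At the junctions between the two arcs, an odd color $2k-1$ is followed by the even color $2k+2$ (or vice versa). This leaves a gap $\{2k,2k+1\}$ of size exactly $2$, and never more.

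The step I expect to be the main obstacle is the wrap-around. Cyclic intervals modulo $n$, once lifted to actual integers in $\{1,\dots,2n\}$, may split into a low block and a high block, which would create a huge gap. I would first check whether the join of $\alpha(v_i)$ and $\gamma(v_i)$ at $v_i$ is a full cycle. If it is, the set of lifted colors at $v_i$ is the whole range $\{1,\dots,2n\}$ minus one parity class on each arc. The wrap point between $n$ and $1$ then only separates the colors $2n-1$ or $2n$ from $1$ or $2$, and $\overline{\varphi}(v)$ and $\underline{\varphi}(v)$ are simply the extremes of this set, so no new interior gap appears. If some vertex nevertheless gets a wrap gap, the fallback is to replace $\alpha'$ and $\gamma'$ by $3\alpha-2$ and $3\gamma$ (or a similar scaling). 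Alternatively, one can shift $\gamma$ by a constant so that in the equal-size case the arcs of $\gamma$ never straddle the residue $0$.

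Finally, I would verify the vertices of $X$. At each $x_j$, $\gamma'$ uses every even color in $\{2,\dots,2n\}$ exactly once, so all gaps there have size $1$. Properness is immediate, since $\alpha'$ is odd and $\gamma'$ is even. This would give $\mathrm{def}_{wloc}(G)\le 2$.
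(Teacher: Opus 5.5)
Your plan follows the paper's sketch, which simply reuses the construction from the unique-largest-part theorem. However, it inherits a false step from that construction, and everything afterwards depends on it. The step is: ``After a cyclic relabelling of colors, the `first' color at $v_i$ is $i$.''

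For $\alpha(v)$ to be a cyclic interval at all, each part of $H$ must occupy a cyclic block of consecutive labels. If $v_i$ lies in the block $\{b,\dots,c\}$, then $\alpha$ misses exactly $\{i+b-1,\dots,i+c-1\}$ at $v_i$, so the arc $\alpha(v_i)$ starts at $i+c$. The offset $c$ depends on the part, so no single cyclic shift makes every arc start at $i$. The starting points need not even be distinct.

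Take $G=K_{2,2,2}$ with $X=\{x_1,x_2\}$, and let $H$ have parts $\{v_1,v_2\}$ and $\{v_3,v_4\}$, so $n=4$. Then $\alpha(v_1)=\alpha(v_3)=\{3,0\}$ and $\alpha(v_2)=\alpha(v_4)=\{0,1\}$. Consequently $\gamma(v_i)=\{i-t,\dots,i-1\}$ need not contain $\mathbb{Z}_n\setminus\alpha(v_i)$, and the covering identity $\alpha(v_i)\cup\gamma(v_i)=\mathbb{Z}_n$ fails. Both of your cases ($s<t$ and $s=t$), and your wrap-around discussion, rest on that identity.

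In this example no choice of $\gamma$ can restore covering. Residue $2$ lies in no $\alpha(v)$, while any $\gamma$ proper at $x_1$ and $x_2$ uses residue $2$ on exactly two edges. With your $\gamma(v_ix_j)=i-j$ and representatives in $\{1,\dots,4\}$, the colors at $v_2$ are $\{1,7\}\cup\{2,8\}=\{1,2,7,8\}$, which has a gap of size $4$.

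Your fallbacks (rescaling to $3\alpha-2$ and $3\gamma$, or shifting $\gamma$) are aimed at the wrap-around, which is not the real issue. Once covering holds, every pair $\{2r-1,2r\}$ meets the lifted set, so all gaps are at most $2$, exactly as you observe. None of the fallbacks restores covering, and a cleverer cyclic relabelling does not help either. For $K_{2,2,2,2}$, with the parts of $H$ labelled $\{v_1,v_2\},\{v_3,v_4\},\{v_5,v_6\}$, each of the six cyclic shifts of the colors of $\alpha$ leaves a gap of size at least $4$ at $v_1$, $v_2$ or $v_3$.

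So you need a genuinely different choice of $\alpha$ and $\gamma$, or a different way of bounding the gaps. Covering alone cannot be the mechanism in general. For an odd complete graph $K_{n+1}$ (so $t=1$ and $n$ is even), covering would force the residues missed by $\alpha$ at the vertices of $K_n$ to be pairwise distinct. That would make every color class of $\alpha$ a matching covering the odd number $n-1$ of vertices, which is impossible.

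The paper's own sketch (``as in the proof of the preceding theorem \dots\ we omit the details'') relies on the same assertion, namely that after a cyclic permutation of colors $i$ is the first color at $v_i$. So it does not supply the missing step either.
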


	This theorem can be proved as the preceding theorem. 
	Using the same notatation as in that proof, let $X$ be a part
	of maximum size in a complete multipartite graph $G$ and 
	consider $H=G-X$. By defining the colorings $\alpha$, $\gamma$,
	and thereafter $\alpha'$ and $\gamma'$,
	as in the proof of the preceding theorem,
	we obtain a proper edge coloring of $G$. Since $X$ is a largest part in $G$,
	it can be verified that the resulting edge coloring has weak local deficiency at most $2$.
	We omit the details.


\section{Bipartite graphs}

	In \cite{CasselgrenPetrosyan}, it was proved
	that all bipartite graphs with maximum degree $5$, as well as Eulerian bipartite graphs
	with maximum degree $6$ are near-interval colorable. Here, we shall 
	prove stronger results in the context of weak local deficiency.
	First we note the following general upper bound, which can be proved using
	Petersen's well-known $2$-factor theorem, 
	which also holds in the setting of multigraphs
	with multiple edges and loops.

\begin{proposition}
\label{th:Eul}
If $G$ is an Eulerian bipartite graph, then $\mathrm{def}_{wloc}(G)\leq \frac{\Delta(G)-\delta(G)}{2}$.	
\end{proposition}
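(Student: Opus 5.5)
The plan is to make $G$ regular by adding loops, apply Petersen's $2$-factor theorem to the resulting multigraph, and then colour the $2$-factors with two colours each, chosen so that the colours missing at a vertex are spread out rather than clustered. Write $\Delta=\Delta(G)=2k$ and $\delta=\delta(G)$; both are even since $G$ is Eulerian.

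First we regularize. At every vertex $v$ of $G$ we attach $(\Delta-d(v))/2$ loops, each contributing $2$ to the degree. The result is a $2k$-regular multigraph $G^*$ with loops. By Petersen's $2$-factor theorem, which, as the paper notes, holds for multigraphs with loops, $E(G^*)$ decomposes into $2$-factors $F_1,\dots,F_k$.

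Next we look at a single $2$-factor $F_i$ at a vertex $v$. Either $F_i$ uses a loop at $v$, and then it uses no edge of $G$ at $v$, or $F_i$ uses exactly two edges of $G$ at $v$. So $F_i\cap E(G)$ is a $2$-regular subgraph on the vertices it covers, that is, a disjoint union of cycles of $G$. Since $G$ is bipartite, all these cycles are even. We colour the edges of each such cycle alternately with colours $i$ and $i+k$.

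The colour classes belonging to different $i$ are disjoint, so this is a proper edge colouring $f$ of $G$ with colours $1,\dots,2k$. At each vertex $v$ we have
$$f(v)=\{i,i+k : F_i \text{ has no loop at } v\}.$$
Let $L_v$ be the set of indices $i$ for which $F_i$ has a loop at $v$, so $|L_v|=(\Delta-d(v))/2\le(\Delta-\delta)/2$. The colours missing at $v$ within $\{1,\dots,2k\}$ are exactly $\{i,i+k : i\in L_v\}$.

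Finally we bound the gaps, which is the key point. A gap is a run of consecutive integers $c,c+1,\dots,c+r-1$, all lying strictly between $\underline f(v)$ and $\overline f(v)$, all missing at $v$. Map each colour to its index modulo $k$, taking the representative in $\{1,\dots,k\}$. Any $r\le k$ consecutive integers have distinct residues modulo $k$, and every missing colour has its index in $L_v$. Hence a gap of length at most $k$ has length at most $|L_v|$.

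It remains to exclude a gap of length more than $k$. Such a gap contains both $i$ and $i+k$ for every $i$, so all indices would lie in $L_v$ and $v$ would be isolated. We may assume $G$ has no isolated vertices, since they impose no condition. Otherwise $|L_v|<k$ and no gap exceeds $k$. Therefore every gap has size at most $(\Delta-\delta)/2$, which gives $\mathrm{def}_{wloc}(G)\le\frac{\Delta(G)-\delta(G)}{2}$.

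The step needing the most care is the choice of colours $i$ and $i+k$ rather than $2i-1$ and $2i$. The naive choice would make a loop at $v$ create a gap of two consecutive colours, doubling the bound. Spacing the two colours of each factor $k$ apart is what makes each missing index contribute at most one colour to any run of consecutive integers. Checking that runs crossing from $k$ to $k+1$ are also controlled is handled by the residue argument above.
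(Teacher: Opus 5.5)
Your proof is correct and follows the paper's construction exactly: add loops to make $G$ regular, apply Petersen's $2$-factor theorem, and colour the cycles of each $F_i'$ alternately with $i$ and $\Delta(G)/2+i$. The paper merely asserts that the gap bound is straightforward, and your residue-mod-$k$ argument supplies that verification; one wording slip is that a run of more than $k$ consecutive colours need not contain both $i$ and $i+k$ for every $i$, but any $k$ consecutive integers meet every residue class modulo $k$, which is all you need to force $L_v=\{1,\dots,k\}$.
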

\begin{proof}
For the proof, we construct a new multigraph $G^{\star}$ as follows: 
for each vertex $u \in V(G)$ of degree $2k$, we add $\frac{\Delta(G)}{2}-k$ loops at $u$ 
$\left(1\leq k< \frac{\Delta(G)}{2}\right)$. Since $G^{\star}$ is $\Delta(G)$-regular, 
Petersen's theorem yields that it can be decomposed into
a union of edge-disjoint $2$-factors $F_{1},\ldots,F_{\frac{\Delta(G)}{2}}$. By removing all
loops from the $2$-factors $F_{1},\ldots,F_{\frac{\Delta(G)}{2}}$, we
deduce that the resulting graph $G$ is a union of edge-disjoint
subgraphs $F^{\prime}_{1},\ldots,F^{\prime}_{\frac{\Delta(G)}{2}}$, where each
$F'_{i}$ is a collection of even cycles in $G$.
For each $i$ 
we color the edges of $F^{\prime}_{i}$ alternately with colors
$i$ and $\frac{\Delta(G)}{2}+i$; let $\alpha$ be the resulting proper edge coloring of $G$. 
It is straightforward that $\alpha$ is a coloring realizing the required upper bound on
$\mathrm{def}_{wloc}$.
\end{proof}

\begin{corollary}
\label{prop:bip1}
Every bipartite graph with even maximum degree $\Delta(G)$, where $\Delta(G) - \delta(G)\leq 2$,
has weak local deficiency at most $1$.
\end{corollary}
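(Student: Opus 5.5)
The plan is to reduce to Proposition~\ref{th:Eul}: embed $G$ in an Eulerian bipartite graph $G'$ with $\Delta(G')=\Delta(G)$ and $\delta(G')\geq \Delta(G)-2$, apply the proposition to $G'$, and restrict the coloring back to $G$. Write $\Delta=\Delta(G)$, which is even. Every vertex of $G$ then has degree $\Delta-2$, $\Delta-1$, or $\Delta$.

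For the embedding, let $O$ be the set of vertices of odd degree in $G$; these are exactly the vertices of degree $\Delta-1$. Take a disjoint copy $G_2$ of $G$, in which each vertex $v$ has a twin $v'$. Form $G'$ from $G\cup G_2$ by adding the edge $vv'$ for every $v\in O$. If $G$ has bipartition $(A,B)$, give $G'$ the bipartition $(A\cup B_2, B\cup A_2)$, where $A_2,B_2$ are the copies of $A,B$. Each new edge $vv'$ joins $v\in A$ to $v'\in A_2$, or $v\in B$ to $v'\in B_2$, so it crosses this bipartition and $G'$ is bipartite. In $G'$, every vertex of $O$ and every twin of a vertex of $O$ now has degree $\Delta$. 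All other degrees are unchanged and already even. Hence $G'$ is even, every component of $G'$ is Eulerian, $\Delta(G')=\Delta$, and $\delta(G')\geq\Delta-2$. If connectedness is required for ``Eulerian'', apply the proposition componentwise; its proof only uses evenness, through Petersen's theorem.

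Proposition~\ref{th:Eul} then gives a proper edge coloring $\alpha$ of $G'$ with $\mathrm{def}_{wloc}(G',\alpha)\leq (\Delta-(\Delta-2))/2=1$. A vertex of degree exactly $\Delta$ whose degree did not change is covered by this bound directly.

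The main obstacle is the restriction step. For $v\in O$, deleting the added edge $vv'$ removes one color from $\alpha_{G'}(v)$, and this could open a gap of size $2$. To deal with this I would look at the explicit coloring in that proof rather than use the proposition as a black box. Each $2$-factor $F'_i$ contributes colors $i$ and $\Delta/2+i$ at a vertex lying on a genuine cycle of $F'_i$, so each vertex sees a union of pairs $\{i,\Delta/2+i\}$. A vertex of full degree in $G'$ sees every color $1,\dots,\Delta$.

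Deleting $vv'$ removes a single color $c$ from $\{1,\dots,\Delta\}$. The resulting set is either an interval (if $c\in\{1,\Delta\}$) or has a single gap $\{c\}$ of size $1$. So $v$ is fine in $G$. Vertices of degree $\Delta-2$ in $G$ keep exactly the coloring from $G'$ and thus have gap at most $1$ by the proposition. Restricting $\alpha$ to $E(G)$ therefore gives a proper edge coloring of $G$ with weak local deficiency at most $1$.
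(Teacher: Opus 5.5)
Your proposal is correct and follows the paper's proof essentially verbatim: take two copies of $G$, join each vertex of degree $\Delta-1$ to its twin, apply Proposition~\ref{th:Eul} to the resulting even bipartite graph, and restrict the coloring to the first copy. Your explicit checks of bipartiteness of $G'$ and of the restriction step fill in details the paper leaves implicit. For the restriction step, a vertex of degree $\Delta$ in $G'$ sees all of $1,\dots,\Delta$, so deleting $vv'$ leaves a gap of size at most $1$; this already follows from the fact that $\alpha$ uses only the colors $1,\dots,\Delta$, without opening up the construction.
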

\begin{proof}
Define an auxiliary graph $G'$ as follows:
take two isomorphic copies $G_{1}$ and $G_{2}$ of the graph 
$G$ and join by an edge every vertex with degree $\Delta(G)-1$ in $G_{1}$ with its copy in $G_{2}$.
The resulting graph is Eulerian, so by Proposition \ref{th:Eul},
it has a weak near-interval coloring $\alpha$ with colors $1,\ldots,\Delta(G)$. 
The restriction of this coloring to $G_1$ is a weak near-interval coloring, so
there is a weak near-interval coloring of $G$ with colors $1,\ldots,\Delta(G)$. 
\end{proof}

Corollary \ref{prop:bip1} can be slightly generalized as follows.

\begin{proposition}
\label{<=2&>=D-2} 
\begin{itemize}
    \item[(a)] If $G$ is a bipartite graph with
$\Delta(G)=2r$ ($r\geq 2$) and for every $v\in V(G)$, $d_{G}(v)\in \{1,2,2r-2,2r-1,2r\}$, then $\mathrm{def}_{wloc}(G)\leq 1$.
    \item[(b)] If $G$ is a bipartite graph with
$\Delta(G)=2r-1$ ($r\geq 2$) and for every $v\in V(G)$, $d_{G}(v)\in
\{1,2,2r-2,2r-1\}$, then $\mathrm{def}_{wloc}(G)\leq 1$.
\end{itemize}
\end{proposition}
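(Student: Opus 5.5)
The plan is to reduce part (b) to part (a) and then prove (a) with the $2$-factor method of Proposition~\ref{th:Eul}, using its weak near-interval slack to deal with the vertices of degree $1$ and $2$.

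\textbf{Reducing (b) to (a).} I would use the doubling trick from the proof of Corollary~\ref{prop:bip1}. Take two copies of $G$ and join each vertex of degree $2r-1$ to its copy. This changes degree $2r-1$ into $2r$, leaves all other degrees unchanged, and keeps the graph bipartite with maximum degree $2r$. The resulting graph satisfies the hypothesis of (a). Restricting a weak near-interval coloring of it to one copy can only delete colors at the doubled vertices. Since $d=2r-1\notin\{1,2\}$, a deleted color is either an end color or a color between two present colors. In the first case the set stays weakly near-interval; in the second it only needs an added non-consecutive color, which has to be checked against the gaps already present at that vertex.

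\textbf{Part (a), the ``heavy'' vertices.} Call a vertex heavy if its degree is in $\{2r-2,2r-1,2r\}$. First pair up the heavy vertices of odd degree, again by doubling, so that every heavy vertex has degree $2r-2$ or $2r$. Then add loops as in Proposition~\ref{th:Eul}, apply Petersen's theorem, and color the $i$-th factor alternately with $i$ and $r+i$. At every vertex, the colors from a given factor occur as the pair $\{i,r+i\}$ or not at all. A degree-$2r$ vertex therefore sees $\{1,\dots,2r\}$. A degree-$(2r-2)$ vertex misses exactly one pair $\{j,r+j\}$, and since $r\ge 2$ these two missing colors are non-consecutive, so the vertex is weakly near-interval. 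The same holds for the odd-degree heavy vertices after the restriction in the doubling step.

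\textbf{Part (a), the ``light'' vertices (main obstacle).} A vertex $v$ of degree $2$ lies in a single factor, so it receives $\{j,r+j\}$. This leaves a gap of size $r-1$, which is fine only when $r=2$. To fix this, I would not add loops at light vertices. Instead, I would decompose $G$ into maximal \emph{threads}: paths whose internal vertices have degree $\le 2$ and whose endpoints are heavy, or are of degree $1$. Each thread is recolored alternately by two consecutive colors $c,c+1$.

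For the heavy core, I would build an auxiliary Eulerian bipartite multigraph. Each thread of odd length between heavy vertices $x,y$ is replaced by an edge $xy$, which preserves bipartiteness. Each even-length thread is replaced by a loop-free gadget of length $2$ through a new vertex of degree $2$. Each pendant thread is replaced by a loop at its heavy end. After the Petersen coloring, the edge $e$ replacing a thread has some color $c$ at $x$ and $c'$ at $y$. The thread is then colored alternately from $c$ and ends in $c$ or $c\pm 1$ at $y$.

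The step I expect to be hardest is ensuring that the color at the $y$-end can be taken to be exactly $c'$. If it cannot, I would use the slack: a heavy vertex missing one pair may have one of its colors shifted by $\pm1$ into a missing slot without creating two consecutive gaps. I would first try to organize this by choosing, within each factor, the orientation of the alternation (which of $i$, $r+i$ goes where) cycle by cycle, so that every gadget gets matching end colors. If that fails, I would settle the remaining bad threads by a local Kempe-type swap on the colors $i$ and $i+1$.
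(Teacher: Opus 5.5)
\textbf{There is a genuine gap in your treatment of the degree-$2$ vertices.} Your reduction of (b) to (a) and your analysis of the heavy vertices are fine. But once $r\ge 3$, the degree-$2$ vertices are the entire difficulty, and you leave them unproved; the repairs you propose cannot work as stated.

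Odd threads are indeed harmless: the replacing edge $xy$ carries a single color $c$, and the thread colored $c,c+1,\dots,c$ fits. Even threads are not:
\begin{itemize}
\item An even thread is replaced by a gadget $xwy$ whose two edges are consecutive on a cycle of a single factor $F'_j$. So they always receive $j$ and $r+j$, whichever way you orient the alternation.
\item An even path colored alternately with $c,c+1$ has end colors differing by exactly $1$. The mismatch is therefore forced, not accidental.
\item Shifting a color into a missing slot is impossible at an endpoint of degree $2r$, whose colors already fill $[1,2r]$.
\end{itemize}
For example, let $G$ be the subdivision of $K_{2r+1}$, which is bipartite with degrees $2$ and $2r$. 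Every thread is an even thread between two vertices of degree $2r$, so every thread is bad and no endpoint has any slack. Everything then rests on the unspecified Kempe swaps, and you give no argument that a swap repairs one thread without breaking others.

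The pendant-thread step also fails as written. Replacing a pendant thread by a loop adds $2$ instead of $1$ to the degree of its heavy end, which destroys the regularity Petersen's theorem needs.

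\textbf{The missing idea: no thread surgery is needed.} Keep the light vertices inside the Petersen argument, and change which color pair each factor receives. Concretely, double the graph so that degrees $1$ and $2r-1$ become even. Then add $r-1$ loops at each vertex of degree $2$ and one loop at each vertex of degree $2r-2$. The paper colors $F'_{2i-1}$ with $4i-3,4i-1$ and $F'_{2i}$ with $4i-2,4i$; if $r$ is odd, it colors $F'_r$ with $2r-1,2r$. Then:
\begin{itemize}
\item a vertex of degree $2$ lies in a single factor, so it sees two colors at distance at most $2$;
\item a vertex of degree $2r-2$ misses one pair, whose two colors are either at distance $2$ or are the top colors $2r-1,2r$;
\item a vertex of degree $2r$ sees $[1,2r]$;
\item vertices of degree $1$ or $2r-1$ just lose one color when you restrict to one copy.
\end{itemize}
Your pairing $\{i,r+i\}$ satisfies the requirement at degree-$(2r-2)$ vertices but not at degree-$2$ vertices, and that is exactly what pushed you into threads.
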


\begin{proof} 
First we prove (a). For that, define an auxiliary  multigraph $G^{\star}$ as follows:

\begin{itemize}

	\item take two isomorphic copies $G_{1}$ and $G_{2}$ of the graph $G$ and join by an edge every vertex with an odd vertex degree in
$G_{1}$ with its copy in $G_{2}$,

\item for each vertex $u \in
V(G_{1}) \cup V(G_{2})$ of degree $2$, we add $r-1$ loops at $u$,
and 

\item for each vertex $v\in V(G_{1}) \cup V(G_{2})$ of degree $2r-2$, we add one loop at $v$. 

\end{itemize}

Clearly, $G^{\star}$ is a $2r$-regular
multigraph, so by Petersen's $2$-factor theorem, $G^{\star}$ can be represented as a union of edge-disjoint $2$-factors $F_{1},\ldots,F_{r}$. By removing all loops from $2$-factors $F_{1},\ldots,F_{r}$ of $G^{\star}$, we obtain that the resulting graph $G^{\prime}$ is a union of edge-disjoint even subgraphs $F^{\prime}_{1},\ldots,F^{\prime}_{r}$. Since $G^{\prime}$ is bipartite, for each $i$ ($1\leq i\leq r$),
$F^{\prime}_{i}$ is a collection of even cycles in $G^{\prime}$.


We define an edge coloring $\alpha$ of $G'$ by coloring the edges of each $F'_{2i-1}$ properly by color $4i-3$ and $4i-1$,
and each $F'_{2i}$ by colors $4i-2$ and $4i$, except that we color $F'_r$ by colors $2r-1$ and $2r$ if $r$ is odd.
Then $\alpha$ is a proper edge coloring of $G^{\prime}$ with colors $1,\ldots,2r$, and for each vertex $v\in V(G^{\prime})$ with $d_{G^{\prime}}(v)=2r$, $\alpha_{G^{\prime}}(v)=[1,2r]$.

Consider the restriction of $\alpha$ to $G_1$. By construction, a vertex of degree $2$ is assigned two consecutive colors or two consecutive even or odd integers.
Similarly, a vertex of degree $2r-2$ is missing two consecutive integers, or two consecutives odd or even numbers. In conclusion, $\alpha$ is
a weak near-interval coloring.

Next, we prove (b).
Let us construct an auxiliary graph $H$ with maximum degree $2r$ as follows: we take two
isomorphic copies of the graph $G$ and join by an edge one vertex of
degree $2r-1$ with its copy. 
By part (a) of Theorem \ref{<=2&>=D-2}, $H$ has a weak near-interval $2r$-coloring. 
The
restriction of this proper edge coloring to the edges of the
graph $G$ is a weak near-interval coloring of $G$.
\end{proof}

Next, we have the following.

\begin{theorem}
\label{th:bip6}
	Every bipartite graph with maximum degree at most $6$ has weak local deficiency at most $1$.
\end{theorem}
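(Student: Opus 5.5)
The plan is to follow the doubling-and-Petersen strategy of Proposition~\ref{<=2&>=D-2}(a) with $r=3$, and then to repair the vertices of degree $3$. Degree $3$ is the only degree in $\{1,\dots,6\}$ not covered there. First, if $\Delta(G)\le 5$, I would embed $G$ in a bipartite graph of maximum degree $6$ by taking two copies of $G$ and joining copies of suitable vertices, as in part (b), so that it suffices to treat $\Delta(G)=6$. Next I form $G^{\star}$ as in the proof of part (a). I take two copies $G_1,G_2$ of $G$ and join every odd-degree vertex of $G_1$ to its copy in $G_2$, so that degrees $1,3,5$ become $2,4,6$. I then add $2,1,0$ loops at vertices of degree $2,4,6$ respectively; in particular a degree-$3$ vertex of $G$ becomes a degree-$4$ vertex carrying one loop. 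The multigraph $G^{\star}$ is $6$-regular, so Petersen's theorem gives $2$-factors $F_1,F_2,F_3$. After deleting loops, each $F_i'$ is a union of even cycles, together with ``paths'' at looped vertices which are still properly $2$-colorable.

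Then I colour $F_1',F_2',F_3'$ alternately with colour pairs $P_1,P_2,P_3$. Following the paper's recipe for $r=3$, these are $\{1,3\}$, $\{2,4\}$ and $\{5,6\}$. Vertices of degree $1,2,4,5,6$ in $G$ are then weakly near-interval exactly as in Proposition~\ref{<=2&>=D-2}. The point is that each such vertex either sees complete pairs, or misses a single colour of one pair, or misses one whole pair. Both the choice of which physical pair is attached to which factor and the choice of the phase of the alternation on each cycle are still free. I would exploit these freedoms at degree-$3$ vertices.

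The main obstacle is the degree-$3$ vertices. Such a vertex $v$ lies fully in one factor $F_a$, loses one edge of a second factor $F_b$ (the cross edge to the other copy is deleted when we restrict to $G_1$), and meets the remaining factor only through its loop. So $\varphi(v)=P_a\cup(P_b\setminus\{c\})$ for some $c\in P_b$. A check of the $3$-subsets of $\{1,\dots,6\}$ shows which combinations are acceptable. For example $\{1,2,3\}$, $\{2,3,4\}$, $\{1,3,4\}$, $\{3,5,6\}$ and $\{4,5,6\}$ are fine, while $\{1,5,6\}$, $\{2,5,6\}$ and $\{1,3,6\}$ are not. My first attempt would be to exploit the freedom of swapping the two colours on the cycle of $F_b$ through $v$, so that the surviving colour $c'$ of $P_b$ is the favourable one. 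Conflicts arise when one cycle passes through several degree-$3$ vertices with incompatible demands. To resolve these I would use the symmetry between the copies $G_1,G_2$: the cross edge at $v$ has the same colour seen from both copies, so only its colour on the cycle matters. I would also choose the coloring of the pairs ($P_1,P_2,P_3$ up to the reflection $c\mapsto 7-c$) so that the pair $\{5,6\}$, or $\{1,2\}$ after reflection, is consecutive. Then whenever $F_a$ or $F_b$ carries the consecutive pair, every choice of $c$ works.

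If global consistency fails, I would fall back on a local Kempe-chain argument like that of the degree-$5$ theorem in Section~2. The idea is to start from the coloring above, call degree-$3$ vertices with a gap of size $2$ problematic, and recolor bad edges along alternating paths, possibly using the extra colours $0$ and $7$ at vertices of degree at least $4$. I would then show, by minimality of the number of problematic vertices, that every problematic vertex can be fixed. Since $G$ is bipartite, these alternating paths cannot close into odd cycles, which is the key advantage over the general case.
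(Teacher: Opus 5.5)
Your set-up is sound. With $P_1=\{1,3\}$, $P_2=\{2,4\}$, $P_3=\{5,6\}$, every vertex of degree $1,2,4,5,6$ is weakly near-interval, and a degree-$3$ vertex gets $P_a\cup(P_b\setminus\{c\})$. This pairing is in fact forced, up to $c\mapsto 7-c$, once degree-$2$ and degree-$4$ vertices must be fine whichever factors they use. But degree $3$ is exactly the case the earlier proposition does not cover, so it is the whole new difficulty, and the proposal does not resolve it.

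Your claim that ``whenever $F_a$ or $F_b$ carries the consecutive pair, every choice of $c$ works'' is false. Checking the six ordered pairs, only $(P_a,P_b)=(\{1,3\},\{2,4\})$, $(\{2,4\},\{1,3\})$ and $(\{2,4\},\{5,6\})$ are insensitive to $c$. The configurations $(\{1,3\},\{5,6\})$, $(\{5,6\},\{1,3\})$ and $(\{5,6\},\{2,4\})$ force the surviving colour to be $5$, $3$ and $4$ respectively. Your own bad sets $\{1,3,6\}$, $\{1,5,6\}$, $\{2,5,6\}$ are exactly these cases, so $\{5,6\}$ causes the trouble rather than curing it.

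Each cycle of a factor has a single phase bit. Yet it can contain the cross edges of several constrained degree-$3$ vertices, at positions of both parities and with incompatible demands, and Petersen's theorem gives no control over this. The symmetry between $G_1$ and $G_2$ gives no per-vertex freedom: the factorization need not be symmetric, and choosing which copy to restrict to is one global bit. The obvious escape, putting every loop at a degree-$3$ vertex into the $\{5,6\}$-factor, is impossible in general. A degree-$6$ vertex all of whose neighbours have degree $3$ would then have no edge in that factor.

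The Kempe-chain fallback is only a statement of intent. You do not say which edge at a problematic vertex is recoloured and to what, why its other endpoint stays weakly near-interval, or why the number of problematic vertices drops. It also runs straight into the obstacle the paper flags after its maximum-degree-$5$ theorem, whose repair works only because no two degree-$3$ vertices are adjacent. Here a problematic degree-$3$ vertex may have all three neighbours of degree $3$. Then no endpoint of degree at least $4$ is available to absorb a colour $0$ or $7$, and bipartiteness does not help with that.

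The paper avoids the issue with a different decomposition and eight colours:
\begin{enumerate}
\item It removes a matching $M_1$ covering the degree-$6$ vertices and a matching $M_2$ of $G-M_1$ covering the degree-$5$ vertices.
\item It splits $H=G-(M_1\cup M_2)$, which has $\Delta(H)\le 4$, into two subgraphs of maximum degree $2$, each degree-$2$ vertex of $H$ lying in only one of them. These are coloured from $\{3,4\}$ and $\{5,6\}$.
\item It colours the paths and even cycles of $M_1\cup M_2$ with the outer pairs $\{1,2\}$ or $\{7,8\}$, switching according to whether the neighbouring low-degree vertices see $\{3,4\}$ or $\{5,6\}$.
\end{enumerate}
Because $\{3,4,5,6\}$ is an interval, any three of its colours form a near-interval, so a vertex with three edges in $H$ is automatically fine. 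That extra room is what your six-colour scheme is missing.
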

\begin{proof}
	Let $G$ be a bipartite graph with maximum degree at most $6$, and 
	let $M_1$ be a matching covering all vertices of degree $6$,
	and such that every edge in $M_1$ has an endpoint of degree $6$. Similarly, let 
	$M_2$ be a matching in $G-M_1$ covering all vertices
	of degree $5$, where every edge in $M_2$ has at least one endpoint of degree $5$. 
	The existence of such matchings follows from 
	e.g.~K\"onig's edge coloring theorem for bipartite graphs.
	Set $F_1 = G[M_1 \cup M_2]$. 
	
	Then $H=G - E(F_1)$ has maximum degree $4$, and by proceeding as
	in the proof of Proposition \ref{th:Eul}, using
	Petersen's $2$-factor theorem, we can decompose
	$G$ into two bipartite subgraphs $F_2$ and $F_3$ 
	with maximum degree $2$, and where every vertex of degree $2$ in $H$ is in exactly
	one of $F_2$ and $F_3$.
	
We color $F_2$ properly by colors $3,4$, and $F_3$ properly by colors $5,6$, so that the following holds:

\begin{itemize}
	\item[(*)]	 if $x, y \in V(G)$ satisfy that $d_G(x) = d_G(y) =2$,
	$x$ and $y$ are endpoints of a path in $F_1$ of even length, and $x$ and $y$ are also endpoints of (possibly distinct) paths in $F_2$ ($F_3$), then
	the edges of $F_2$ ($F_3$) incident with $x$ and $y$, respectively, are colored differently. 
\end{itemize}
	This is clearly possible, since we may color paths  in $F_2$ and $F_3$ 
	sequentially using colors $2,3$ and $5,6$, respectively.

\bigskip

	Next, we color the components of $F_1$. Note that by construction, every edge in $F_1$ has at least one 
	endpoint of degree at least $5$. 
	
	Suppose that $C$ is a cycle in $F_1$. Then every edge in $F_1$ 
	has an endpoint of degree $6$. Fix a cyclic orientation of $C$ and
	assume further that $C$ has a vertex $x$ of degree at most $4$ in $G$ where color $3$ or $4$ (or both)
	appears; then
	we color the edges incident with $x$ by $1$ and $2$. Next, we continue coloring the edges $1,2$ alternately,
	according to the cyclic orientation,
	until we reach a vertex $y$ which in $C$ is adjacent to a vertex $z$ of degree at most $4$ in $G$ that is
	incident with an edge colored $5$ or $6$.
	(Note that this means that $y$ has degree $6$ in $G$.)
	Then we instead color $yz$ by $7$, and continue coloring the edges alternately by $7$ and $8$, until we
	reach a vertex in $C$ that is
	adjacent (in $C$) to a vertex $w$ of degree at most $4$ in $G$ that is incident with an edge 
	colored $3$ or $4$, in which
	case we switch back to coloring by colors $1,2$ alternately. 

	We continue in this manner until $C$ is properly colored. If the last uncolored portion of 
	$C$ is colored by $1$ and $2$, then we choose the coloring
	of these edges so that the resulting coloring is proper.

	If instead $C$ has no vertex of degree at most $4$ where color $3$ or $4$ appears, 
	then we color the edges of $C$ by $7$ and $8$
	alternately.
	
	In both cases, this yield a weak near-interval coloring of $G[V(C)]$, because every vertex in $C$ 
	either has degree 
	$2,3,4$ or $6$. If a vertex has degree smaller than $6$ and greater than $2$,
	then its incident edges from $G-E(F_1)$ are either colored by colors from $\{3,4\}$ or 
	$\{5,6\}$, but not from both sets. In the first case, incident edges from $C$ are colored $1,2$,
	and in the second case by $7,8$.
	
	If a vertex in $C$ has degree $6$ in $G$, then its incident
	edges in $G-E(F_1)$ are colored $3,4,5$ and $6$, and in $C$ it is incident with 
	two edges with colors from $\{1,2,7,8\}$, which
	in all cases yield a weak near-interval.

	\bigskip

	Let us now consider a path $P$ in $F_1$. If there are no vertices where only colors 
	from $\{3,4\}$ ($\{5,6\}$) appear,
	then we color the edges of $P$ alternately by $7,8$ ($1,2$). Moreover, we color $P$ 
	so that if one endpoint is only adjacent to an edge colored $5$ ($6$) then we 
	color the adjacent edge of $P$ by color $7$ ($8$); and
	similarly for colors $1,2$ if there are no vertices where only colors from $\{5,6\}$ appear. 
	If $P$ has even length, then
	this yields a weak near-interval coloring of $G[V(P)]$,
	since the condition (*) holds.
	If $P$ instead has odd length, then two vertices of degree $6$ are adjacent on $P$, and we color
	the edge between these vertices by $7$, and all other edges by $1,2$, so that the last
	edge of $P$ is colored $2$. This yields a weak near-interval coloring of  $G[V(P)]$.

	Let us now assume that there are vertices in $P$ where only colors from $\{3,4\}$ appear, 
	and also where only colors from $\{5,6\}$ appear.
	We proceed similarly to the case of cycles. Suppose e.g. that color $3$ ($4$) appear at 
	one endpoint of $P$; then we color the first edge
	by $1$ ($2$), and then continue coloring the edges along 
	$P$ by colors $1,2$ alternately, until we reach a vertex
	$y$ which in $P$ is adjacent to a vertex $z$ of degree at most $4$ in $G$ that is incident with 
	edges only colored by colors from $\{5,6\}$.
	Then we switch to coloring the edges alternately by colors $7,8$ as in the case of cycles. 
	We continue coloring in this manner until
	$P$ is properly colored, choosing the $2$-coloring for the last portion of $P$ so that 
	the last edge of $P$ is either colored $2$
	or $7$.
	As in the case of cycles, this yields a weak near-interval coloring of $G[V(P)]$.

\bigskip
	
	We conclude that we may color all cycles and paths of $F_1$ according to the description above to
	obtain a weak near-interval coloring of $G$.	
\end{proof}

	Next, we consider Eulerian bipartite graphs. 
	We shall use the following proposition, which follows from a decomposition technique
	in \cite{AsratianCasselgrenPetrosyan2} (see Theorem 2.7 in \cite{AsratianCasselgrenPetrosyan2}).

\begin{proposition}
\label{prop:decomp}
	Every Eulerian bipartite graph with maximum degree $8$ has a decomposition
	into two subgraphs where all vertices have degree $2$ or $4$.
\end{proposition}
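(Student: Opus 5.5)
Since the statement is attributed to Theorem 2.7 of \cite{AsratianCasselgrenPetrosyan2}, the plan is to derive it from that decomposition technique via a degree-balancing argument.

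Let $G$ be an Eulerian bipartite graph with $\Delta(G)=8$; all degrees lie in $\{2,4,6,8\}$. We seek a partition $E(G)=E(H_1)\cup E(H_2)$ such that every vertex of $G$ has degree $0$, $2$ or $4$ in each $H_i$, and every vertex lies in at least one $H_i$ with degree in $\{2,4\}$. Concretely: a vertex of degree $8$ gets $4+4$, degree $6$ gets $4+2$, degree $4$ gets $2+2$ or $4+0$, and degree $2$ gets $2+0$.

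\textbf{Step 1: reduce to a regular multigraph.} Add loops, as in Proposition~\ref{th:Eul}, to make $G$ $8$-regular, obtaining $G^{\star}$. By Petersen's theorem, decompose $G^{\star}$ into four $2$-factors $F_1,\dots,F_4$. Set $H_1=(F_1\cup F_2)-\text{loops}$ and $H_2=(F_3\cup F_4)-\text{loops}$. Then every vertex has even degree in each $H_i$, at most $4$. However, a vertex of degree $6$ could get $6=4+2$ or $2+4$, which is fine, while a vertex of degree $8$ always gets $4+4$. So every degree in $H_i$ is $0$, $2$ or $4$.

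Thus a plain Petersen argument already yields two even subgraphs with degrees in $\{0,2,4\}$. The only issue is whether ``all vertices have degree $2$ or $4$'' forbids degree $0$, i.e.\ whether each $H_i$ must span only its own vertex set. Reading each subgraph as spanning its own vertices, isolated vertices are not part of $H_i$, so the statement follows.

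\textbf{Step 2: the stronger version, in case it is needed.} If degree $0$ at a vertex of $G$ of degree $2$ or $4$ must be avoided (e.g.\ we want both $H_i$ spanning $V(G)$ with degree $\ge 2$ wherever $d_G\ge 4$), we balance the degrees instead. Take an Euler tour of each component and orient $G$ along it, giving in-degree equal to out-degree. In the bipartite graph, the edges split into the two classes ``directed from part $A$ to $B$'' and ``directed from $B$ to $A$''. Each vertex then has exactly half its degree in each class, i.e.\ $1,2,3,4$ edges. This is not even, so instead pair consecutive tour edges through each vertex and alternate the pairing by taking every other visit. The visits to a vertex $v$ number $d(v)/2\in\{1,2,3,4\}$, and we assign visits alternately to $H_1$ and $H_2$ along the tour. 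A vertex of degree $8$ then gets $2$ visits per class, i.e.\ degree $4+4$; degree $6$ gets $4+2$; degree $4$ gets $2+2$; degree $2$ gets $2+0$.

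\textbf{Main obstacle.} The main difficulty is consistency of the alternation: the tour passes through many vertices, and alternating visit parity at all of them simultaneously is not generally possible with a single global choice. I would handle this by building an auxiliary graph whose vertices are the visits, i.e.\ the transitions at vertices, and whose edges link visits sharing a tour edge. A valid assignment then corresponds to a $2$-colouring in which, at each vertex, the visits are balanced. I would try to achieve this balance by the standard splitting trick: split each vertex $v$ into $d(v)/4$ vertices of degree $4$, plus possibly one of degree $2$, obtaining a graph with maximum degree $4$. Bipartiteness keeps all cycles even, and one more Petersen/alternating-cycle colouring of the split graph gives the balance. Verifying that the split graph's cycles yield degree $2$ or $4$ in each part, rather than $0$, is the step I expect to require care.
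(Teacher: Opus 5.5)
Your loop-plus-Petersen argument in Step~1 is correct only under the reading you settle on there, where isolated vertices simply do not belong to $H_i$. Under that reading the statement is nearly vacuous, and it is not what the paper needs. With four $2$-factors grouped in pairs, a vertex of degree $4$ can have both of its loops in the same pair, and so have all four of its edges in one subgraph; your own target list even allows $4+0$.

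In the proof of Theorem~\ref{th:WeakEul}, such a vertex lying entirely in $R$ receives the colours $\{1,3,6,8\}$. It then misses the two consecutive colours $4,5$, so the colouring is not weakly near-interval there. What the paper actually requires is the balanced split: $4+4$ at degree $8$, $4+2$ or $2+4$ at degree $6$, $2+2$ at degree $4$, and $2+0$ or $0+2$ at degree $2$. For that version your proposal has a genuine gap.

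The Euler-tour mechanism in Step~2 cannot produce the balanced split. Along a fixed tour, every edge is the outgoing edge of one visit and the incoming edge of the next. If both edges of a visit go to the same subgraph, consecutive visits must get the same class, so the whole tour is monochromatic. Your auxiliary visit graph therefore imposes equalities rather than a $2$-colouring, and nothing can be balanced without changing the transition system.

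The vertex-splitting idea you mention last is the right fix, but you only gesture at it. The finishing step you name (``alternating-cycle colouring'') would give each vertex of a cycle one edge of each colour, i.e.\ odd degrees. Here is the argument that works:
\begin{enumerate}
\item Split each vertex of degree $8$ into two vertices of degree $4$, and each vertex of degree $6$ into one vertex of degree $4$ and one of degree $2$.
\item Attach one loop to every vertex of degree $2$, giving a $4$-regular multigraph.
\item Apply Petersen's theorem to get $2$-factors $F_1,F_2$.
\end{enumerate}
A degree-$4$ piece carries no loop, so it has exactly two real edges in each $F_i$. A degree-$2$ piece has its loop in one $F_i$ and both real edges in the other. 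Deleting the loops and re-identifying the pieces gives $H_1,H_2$ with exactly the splits listed above.

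Bipartiteness plays no role in this argument; it is merely inherited by $H_1,H_2$, as Lemma~\ref{lem:4} needs. The degree-$0$ occurrences you were worried about arise only at degree-$2$ pieces, where they are forced and harmless. The paper itself gives no proof and simply imports the statement from the cited Theorem~2.7, so this argument would also make the result self-contained.
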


We shall also use the following lemma from \cite{CasselgrenPetrosyan}.

\begin{lemma}
\label{lem:4}
	If $G$ is a bipartite graph with $\Delta(G) \leq 4$, then $G$ 
	has a proper 4-edge-coloring such that every vertex of degree $2$
	is incident with edges colored $1,2$, or colored $3,4$.
\end{lemma}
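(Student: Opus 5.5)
The plan is to reduce to an Eulerian bipartite graph of maximum degree at most $4$, exactly as in the proofs of Proposition~\ref{th:Eul} and Proposition~\ref{<=2&>=D-2}, and then apply Petersen's $2$-factor theorem with loops placed so that each vertex of degree $2$ falls entirely into one $2$-factor.

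First we make all degrees even. Take two copies $G_1$ and $G_2$ of $G$. Give $G_2$ the \emph{swapped} bipartition, so that the copy of a vertex in one part of $G_1$ lies in the other part of $G_2$. Then join every vertex of odd degree in $G_1$ (degree $1$ or $3$) to its copy in $G_2$. The resulting graph $H$ is bipartite and has all degrees in $\{2,4\}$; we may also assume $\Delta(H)=4$, since otherwise $H$ is a union of even cycles and everything below is trivial. Note that the vertices of degree $2$ in $H$ are exactly the copies of vertices of degree $1$ or $2$ in $G$. The swapped bipartition is the one point needing care: without it the joining edges could create odd cycles.

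Next, form the multigraph $H^\star$ by adding one loop at every vertex of degree $2$ in $H$. Then $H^\star$ is $4$-regular, so by Petersen's theorem it decomposes into two $2$-factors $F_1$ and $F_2$. A loop contributes $2$ to the degree within its factor. Hence at a vertex $v$ of degree $2$ in $H$, the loop fills one factor completely, and both real edges at $v$ lie in the other factor. Deleting the loops gives subgraphs $F_1',F_2'$ of $H$ in which every vertex has degree $0$ or $2$. Since $H$ is bipartite, every component of $F_i'$ is an even cycle.

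Now colour the edges of $F_1'$ alternately by $1,2$ and those of $F_2'$ alternately by $3,4$; this is a proper edge colouring because every cycle is even. A vertex of degree $4$ in $H$ receives all of $1,2,3,4$, and a vertex of degree $2$ in $H$ receives either $\{1,2\}$ or $\{3,4\}$. Restrict this colouring to $G_1\cong G$. A vertex of degree $2$ in $G$ has no added edge, so it still sees $\{1,2\}$ or $\{3,4\}$. The colouring uses only colours $1,\dots,4$, which proves the lemma.

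I expect no serious obstacle. The only delicate point is keeping $H$ bipartite, which the mirrored bipartition of $G_2$ handles, together with the observation that each loop occupies a whole $2$-factor at its vertex.
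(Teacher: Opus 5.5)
Your proof is correct and complete. The paper gives no proof of this lemma (it is quoted from \cite{CasselgrenPetrosyan}). Your argument uses exactly the technique the paper uses in Proposition~\ref{th:Eul}, Corollary~\ref{prop:bip1}, and the splitting of $H$ into $F_2$ and $F_3$ in Theorem~\ref{th:bip6}. That technique is: double $G$ and join the copies of odd-degree vertices, add a loop at each degree-$2$ vertex, split the resulting $4$-regular multigraph into two $2$-factors by Petersen's theorem, and color the two families of even cycles by $1,2$ and by $3,4$.

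One remark in your write-up is misleading. $H$ is bipartite no matter how you label the parts of $G_2$, since it is a subgraph of the Cartesian product of $G$ with $K_2$. So the \emph{swapped bipartition} is only how you exhibit the bipartition; it is not a choice that prevents odd cycles.
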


\begin{theorem}
\label{th:WeakEul}
	Every Eulerian bipartite graph with maximum degree at most $8$ has weak local deficiency $1$.
\end{theorem}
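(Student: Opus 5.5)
The case $\Delta(G)\le 6$ is covered by Theorem \ref{th:bip6}, so I would assume $\Delta(G)=8$; all degrees of $G$ then lie in $\{2,4,6,8\}$. By Proposition \ref{prop:decomp}, I would decompose $G$ into two spanning subgraphs $G_1$ and $G_2$ in which every non-isolated vertex has degree $2$ or $4$. Each $G_i$ is bipartite with $\Delta(G_i)\le 4$. Lemma \ref{lem:4} then gives a proper $4$-edge-coloring of $G_i$ in which every vertex of degree $2$ in $G_i$ sees one of two fixed \emph{pairs} of colors. The idea is to rename the four colors of $G_1$ and of $G_2$ by disjoint sets of integers that interleave, so that unions of the possible color sets are weak near-intervals.

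My first choice of renaming is to give $G_1$ the colors $2,4,6,8$, with pairs $\{2,4\}$ and $\{6,8\}$, and $G_2$ the colors $1,3,5,7$, with pairs $\{1,3\}$ and $\{5,7\}$. A vertex of degree $8$ is $4+4$ and sees $[1,8]$. A vertex of degree $6$ is $4+2$ or $2+4$; it sees, for example, $\{1,2,3,4,6,8\}$ or $\{2,4,5,6,7,8\}$, and all gaps have size $1$. A vertex of degree $2$, or a vertex of degree $4$ split as $4+0$, sees a set with only isolated gaps. For a vertex of degree $4$ split as $2+2$, the combinations $\{2,4\}\cup\{1,3\}$, $\{2,4\}\cup\{5,7\}$ and $\{6,8\}\cup\{5,7\}$ are fine. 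The single bad case is $\{6,8\}\cup\{1,3\}$, which has the gap $\{4,5\}$.

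The main obstacle is therefore the degree-$4$ vertices split $2+2$ whose two pairs are ``opposite''. To remove it, I would use the freedom inside Lemma \ref{lem:4}. Its proof, as in Proposition \ref{th:Eul}, colors $2$-factor-like pieces, namely even cycles and paths, alternately. So in each component of the graph spanned by one color class pair, I can swap the pair $\{1,3\}\leftrightarrow\{5,7\}$ in $G_2$, or $\{2,4\}\leftrightarrow\{6,8\}$ in $G_1$. Swapping a whole $2$-regular piece between the two pairs does not disturb degree-$4$ and degree-$6$ vertices of $G_i$. This is because such vertices see both pairs anyway, provided the pieces come from a Petersen-type split into two $2$-factors of an auxiliary regular multigraph, as in Proposition \ref{th:Eul}.

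Concretely, I would rebuild the coloring of $G_i$ as follows. Pad $G_i$ to a $4$-regular multigraph with loops, split it into two $2$-factors $F_i^1,F_i^2$, and color each $F_i^j$ with one pair. This gives a degree-$2$ vertex of $G_i$ exactly one pair, as in Lemma \ref{lem:4}. Then, for each $2+2$ vertex, the requirement ``not $\{6,8\}$ together with $\{1,3\}$'' becomes a constraint between the factor containing $v$ in $G_1$ and the factor containing $v$ in $G_2$. I would set up an auxiliary graph whose vertices are the cycle components of all the $F_i^j$, with an edge for each $2+2$ vertex linking its two cycles. I would then choose the pair assignments by a $2$-coloring argument, or by re-splitting with the alternative $2$-factorization via Euler tours to fix any odd conflict. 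If this orientation argument fails, the fallback is to refine Proposition \ref{prop:decomp} so that every degree-$4$ vertex of $G$ is split $4+0$; all remaining cases are already verified above.
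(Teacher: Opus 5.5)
\textbf{There is a genuine gap: the repair of the bad $2+2$ case is never carried out, and the step it rests on is false.}

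Your case check for the interleaving $\{2,4\},\{6,8\}$ versus $\{1,3\},\{5,7\}$ is correct. So is your diagnosis that a $2+2$ vertex seeing $\{6,8\}$ and $\{1,3\}$ gets $\{1,3,6,8\}$, with the gap $\{4,5\}$.

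The trouble is the claim that a single cycle can have its pair swapped freely. A cycle of $F_i^1$ cannot be swapped on its own. Every vertex of degree $4$ in $G_i$ lies on one cycle of $F_i^1$ and one cycle of $F_i^2$, and these must carry opposite pairs, or that vertex sees the same two colors twice. So a swap propagates through a whole component of $G_i$. For fixed $2$-factorizations you have exactly one bit per component of $G_1$ and one per component of $G_2$. Each $2+2$ vertex forbids exactly one of the four joint values of the bits of its two components. If, say, $G_1$ and $G_2$ are connected and the $2+2$ vertices realize all four combinations of factor memberships, no choice survives.

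Your ``re-splitting via Euler tours'' gives no argument that some factorization avoids this. The fallback is impossible in general. If every degree-$4$ vertex is split $4+0$, adjacent degree-$4$ vertices must lie on the same side. Then a vertex of degree $6$ or $8$ with five neighbours in one component of the subgraph induced by the degree-$4$ vertices cannot be split $4+2$ or $4+4$, and such Eulerian bipartite graphs are easy to build.

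\textbf{The missing idea is simply the other interleaving, which is what the paper uses.} Color one part $R$ via Lemma \ref{lem:4} with $1,3,6,8$, using pairs $\{1,3\}$ and $\{6,8\}$. Color the other part $B$ with $2,4,5,7$, using pairs $\{2,4\}$ and $\{5,7\}$. Then:
\begin{itemize}
\item[(a)] the four $2+2$ unions are $[1,4]$, $\{1,3,5,7\}$, $\{2,4,6,8\}$ and $[5,8]$;
\item[(b)] the $4+2$ and $2+4$ unions are $\{1,2,3,4,6,8\}$, $\{1,3,5,6,7,8\}$, $\{1,2,3,4,5,7\}$ and $\{2,4,5,6,7,8\}$;
\item[(c)] a $4+4$ vertex sees $[1,8]$, and a degree-$2$ vertex sees a single pair.
\end{itemize}
All of these are weak near-intervals, so no conflicts arise and no recoloring is needed.

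The only bad set in this scheme is $\{1,3,6,8\}$, coming from a $4+0$ split into $R$. The paper takes Proposition \ref{prop:decomp} to give every degree-$4$ vertex of $G$ degree $2$ in both $R$ and $B$. That is what ``all vertices have degree $2$ or $4$ in these subgraphs'' says for such vertices. Under your weaker reading, which allows $4+0$, the paper's scheme would also need that case excluded. In short, your interleaving handles the $4+0$ split, which the decomposition does not produce, and fails on the $2+2$ split, which it does.
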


\begin{proof}
	By the preceding theorem, we may assume $G$ has maximum degree $8$.
	Using Proposition \ref{prop:decomp}, we
	decompose $G$ into a blue subgraph $B$ and a red subgraph $R$ so that 
	both $R$ and $B$ satisfy that all vertices have degree $2$ and $4$ in these subgraphs.
	Using Lemma \ref{lem:4}, we color $R$ properly by colors $1,3, 6,8$,
	so that all vertices with degree two are incident with edges colored $1,3$ or $6,8$; 
 	similarly, we color $B$ using the colors $2,4, 5,7$ with color pairs $2,4$ and $5,7$.

	The obtained edge coloring weakly near-interval.
\end{proof}

\begin{corollary}
	Every  bipartite graph with maximum degree at most $8$, with no vertices of degree $5$ and
	where no two vertices of degree $3$ are adjacent, has weak local deficiency $1$.
\end{corollary}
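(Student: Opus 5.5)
The plan is to reduce the corollary to Theorem~\ref{th:WeakEul} by embedding $G$ into an Eulerian bipartite graph $G'$ of maximum degree at most $8$. A weak near-interval coloring of $G'$ must then restrict to one of $G$. The key point is that restriction must not destroy the weak near-interval property. A vertex $v$ of $G$ whose degree grows in $G'$ loses some colors when we delete the added edges, and we must ensure that no two consecutive colors disappear from the middle of $v$'s color set.

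The plan is to add edges only at odd-degree vertices of $G$, which by hypothesis have degree $1$, $3$ or $7$. Following the doubling trick of Corollary~\ref{prop:bip1}, take two copies $G_1,G_2$ of $G$ and join each odd-degree vertex of $G_1$ to its copy in $G_2$. The result $G'$ is bipartite (put the copy of each vertex on the opposite side in $G_2$), every degree is even, and the maximum degree is at most $8$. Each component with an edge is Eulerian; if Theorem~\ref{th:WeakEul} is stated only for connected graphs, apply it componentwise. Applying Theorem~\ref{th:WeakEul} gives a weak near-interval coloring $\alpha$ of $G'$, and we restrict it to $G_1$.

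We then check vertex by vertex that $\alpha|_{G_1}$ is weakly near-interval. Vertices of even degree in $G$ keep their full color set, so they are fine. A vertex of degree $1$ has a single color, which is trivially fine. At a vertex of degree $3$ or $7$ exactly one color $c$ is removed from the set $\alpha_{G'}(v)$, which is a weak near-interval. Removing $c$ could create a gap of length $2$ if $c$ was adjacent to an existing gap, or it could simply shorten the set if $c$ was an endpoint.

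This gap creation is the main obstacle, and I would handle it by opening up the proof of Theorem~\ref{th:WeakEul} rather than using it as a black box. In that proof the colors at a degree-$8$ vertex form exactly $\{1,\dots,8\}$, so removing one color leaves a gap of size at most $1$; hence degree-$7$ vertices of $G$ are safe. For a vertex of degree $4$ in $G'$ (degree $3$ in $G$), the structure from Proposition~\ref{prop:decomp} and Lemma~\ref{lem:4} matters. Such a vertex has degree $4$ in one of $R,B$ and $0$ in the other, so its colors are $\{1,3,6,8\}$ or $\{2,4,5,7\}$. Alternatively it has degree $2$ in each, with colors $\{1,3\}$ or $\{6,8\}$ together with $\{2,4\}$ or $\{5,7\}$.

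The color sets $\{1,3,6,8\}$, $\{1,3,5,7\}$ and $\{2,4,6,8\}$ already have gaps, and deleting a middle color would produce a gap of $2$ or $3$. So I would add freedom in the construction. Since the doubling edges form a matching between the two copies, I would choose the decomposition of $G'$ so that each degree-$4$ vertex of $G'$ arising from a degree-$3$ vertex has degree $2$ in each of $R$ and $B$, with a good pairing such as $\{1,3\},\{2,4\}$ or $\{6,8\},\{5,7\}$. I would then arrange that the doubling edge receives an extreme color, $1$ or $4$ (respectively $5$ or $8$). This last step is done by swapping the roles of the colors within the Lemma~\ref{lem:4} coloring locally, which is possible because the pendant-like doubling edges can be recolored independently using the symmetry between $G_1$ and $G_2$. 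Verifying that this choice can be made consistently is the delicate step; failing that, I would instead attach a suitable gadget at each degree-$3$ vertex, relying on the hypothesis that degree-$3$ vertices are non-adjacent so that the gadgets do not interfere.
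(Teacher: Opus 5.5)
\textbf{There is a genuine gap: the step that handles degree-$3$ vertices is not proved.} The doubled graph $G'$ is Eulerian, bipartite and has maximum degree at most $8$. You also correctly find that after restricting to $G_1$ only vertices of degree $3$ in $G$ can fail, since degrees $1$ and $7$ are safe and degree $5$ is excluded. The trouble is the next step.

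Proposition~\ref{prop:decomp} gives no control over how a particular degree-$4$ vertex of $G'$ splits between $R$ and $B$. Lemma~\ref{lem:4} gives no control over which pair a particular degree-$2$ vertex receives. The class ($\{1,3\}$ versus $\{6,8\}$ in $R$) is constant along each path of degree-$2$ vertices. It is also constrained at every degree-$4$ vertex of $R$, where exactly two of the four incident edges get colors from $\{1,3\}$. Your requirement further couples the $R$-class and the $B$-class at each such vertex (low with low, high with high). Nothing shows that all these constraints can be met at once.

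The justification you offer does not work. The doubling edges are not ``pendant-like'' in $G'$: they lie on cycles of $R$ or $B$. Changing one of their colors or classes forces a Kempe-type swap along a whole path or cycle, which changes classes elsewhere. The symmetry between $G_1$ and $G_2$ is not preserved by the black boxes you invoke, and the gadget fallback is left unspecified.

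Note also that your main route never uses the hypothesis that no two degree-$3$ vertices are adjacent. If it worked, it would prove the corollary without that hypothesis, settling a case of the paper's concluding conjecture (iii). That is a strong sign the ``delicate step'' is where all the difficulty lies. A minor point: once a vertex has colors $\{1,2,3,4\}$ or $\{5,6,7,8\}$ in $G'$, deleting any one color is harmless, so your extreme-color condition is superfluous.

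\textbf{The missing idea is to repair the coloring afterwards rather than steer the construction.} The paper takes the coloring from the proof of Theorem~\ref{th:WeakEul} as is and restricts it to a copy of $G$. The only failures are degree-$3$ vertices $v$ with $f(v)\in\{\{2,4,8\},\{1,3,7\},\{1,5,7\},\{2,6,8\}\}$. In successive rounds it recolors the edge $uv$ carrying the color $8$, $7$, $1$, $2$ respectively. The new color is one missing at $u$ from a fixed preference list, or else a new color $0$, $-1$, $9$ or $10$.

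Since degree-$3$ vertices are pairwise nonadjacent and there are no degree-$5$ vertices, $u$ has degree different from $3$ (and $5$). So $u$'s color set is constrained enough to check that it stays weakly near-interval. The order of the rounds guarantees that $0$ appears only together with $1,3,5$ or $2,4,5$, and $-1$ only together with $0$. Likewise $9$ appears only together with $4,5,7$, and $10$ only together with $9$. This is exactly where the non-adjacency hypothesis is used.
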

\begin{proof}
	Take two copies $G_1$ and $G_2$ of a graph $G$ as described in the corollary, 
	and join corresponding odd-degree 
	vertices by an edge, so that the resulting graph $H$ is Eulerian. We color $H$
	as in the proof of the preceding
	theorem, and take the restriction of this coloring to a copy of $G$. It is readily verified that this coloring,
	denoted $f$, is weakly near-interval at every vertex that does not have degree $3$.
	In fact, if $f(v)$ is not a weak near-interval, then
	$$f(v) \in \left\{ \{1,3,7\}, \{1,5,7\}, \{2,4,8\}, \{2,6,8\}  \right\}.$$

	We shall recolor edges colored $1,2,7,8$ incident to such vertices to obtain a weak near-interval
	coloring.  Since no two vertices of degree $3$ are adjacent, every recolored
	edge has at least one endpoint with degree distinct from $3$.

	We start by recoloring the edges colored $8$ incident with vertices $v$ with $f(v)=\{2,4,8\}$.
	Assume $uv$ is such an edge colored $8$. If one of the colors $5,3,1$ is missing at $u$,
	then we recolor $uv$ by this color (in that order of preference, since a vertex of degree two,
	where color $8$ appears, is incident with an edge of color $6$); otherwise recolor $uv$ by $0$. 
	Since $u$ does not have degree $3$, it follows that the resulting coloring is weakly near-interval
	at $u$.

	Let us now consider the edges colored $7$ that are incident with vertices $v$ where $1,3,7$ appears.
	Assume $uv$ is such an edge colored $7$. If one of the colors $4,2,5$ is missing at $u$,
	then we recolor $uv$ by this color (in that order of preference, since a vertex of degree two,
	where color $7$ appears, is incident with an edge colored $5$). Otherwise,	
	if color $0$ does not appear at $u$, we recolor $uv$ by $0$, and if $0$ appears at $u$,
	we recolor $uv$ by the color $-1$. The resulting coloring is weakly near-interval at $u$.

	We note the following two properties of the hitherto constructed coloring:
\begin{itemize}
	\item[(a)]	If $0$ appears at a vertex $u$, then either colors $1,3,5$ or colors $2,4,5$ appear at $u$
	as well.

	\item[(b)]	If color $-1$ appears at a vertex $u$, then colors $0,2,4,5$ appear at $u$ as well.
\end{itemize}

	Now we consider the edges colored $1$ that are incident with vertices $v$ where $1,5,7$ appear.
	Assume $uv$ is such an edge colored $1$. If one of the colors $4,6,8$ is missing at $u$,
	then we recolor $uv$ by this color (in that order of preference). Otherwise,	
	we recolor $uv$ by the color $9$. The resulting coloring is weakly near-interval at $u$.

	Finally, let us consider the edges colored $2$ that are incident with vertices $v$ where $2,6,8$ appears.
	Assume $uv$ is such an edge colored $2$. If one of the colors $5,4,7$ is missing at $u$,
	then we recolor $uv$ by this color (in that order of preference).
	Otherwise, if $9$ does not appear at $u$, then we recolor $uv$ by $9$, otherwise
	we recolor by $10$.

	Note that if $9$ appears at a vertex, then $4,5,7$ appears at this vertex, and if $0$ in addition
	appears at the same vertex, then in addition $1$ and $3$, or $2$, appears at $u$.
	Furthermore, $-1$ only appears at a vertex where $0$ is present, and $10$ only appears at a vertex
	where $9$ is present.

	In conclusion, the obtained coloring is weakly near-interval.
\end{proof}


\section{Graphs with large local and weak local deficiency}

Besides trivial examples of graphs without interval colorings such as $K_3$, there are several well-known infinite families of bipartite graphs without interval colorings
(see e.g. \cite{GiaroKubaleMalaf1, PetrosHrant}). Here we study the local and weak local deficiency
of some such families.
	
	We begin by considering two families of graphs which are known to have
	large deficiency \cite{GiaroKubaleMalaf1}.
For any $a,b,c\in \mathbb{N}$, define the graph $S_{a,b,c}$ as follows:

$$V(S_{a,b,c})=\{u_{0},u_{1},u_{2},u_{3},v_{1},v_{2},v_{3}\}\cup \{x_{1},\ldots, x_{a},y_{1},\ldots, y_{b},z_{1},\ldots,z_{c}\}
$$
and
\begin{align*}
E(S_{a,b,c}) = & \{u_{1}v_{1},v_{1}u_{2},u_{2}v_{2},v_{2}u_{3},u_{3}v_{3},v_{3}u_{1}\}
\cup \{u_{0}x_{i},u_{1}x_{i}:1\leq i\leq a\}
\\
& \cup\{u_{0}y_{j},u_{2}y_{j}:1\leq j\leq b\}\cup\{u_{0}z_{k},u_{3}z_{k}:1\leq k\leq c\}.
\end{align*}
The graphs $S_{a,b,c}$ are known as the Sevastjanov rosettes. Figure \ref{fig1} shows the graph $S_{7,7,7}$.

\begin{figure}[h]
\begin{center}
\includegraphics[width=27pc]{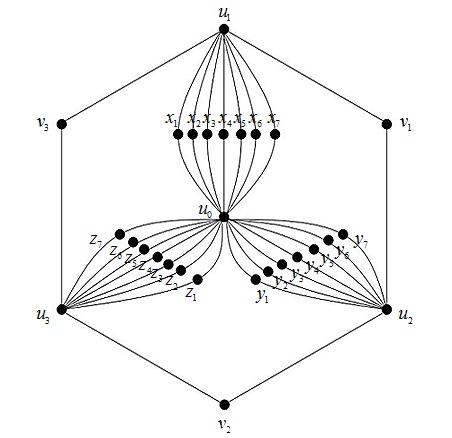}\\
\caption{The graph $S_{7,7,7}$.}\label{fig1}
\end{center}
\end{figure}

Next we define a family of graphs $\Delta_{a,b,c}$  ($a,b,c\in \mathbb{N}$) as follows.
We set
$$V(\Delta_{a,b,c})=\{u_{0},u_{1},u_{2},u_{3}\}\cup \{x_{1},\ldots, x_{a},y_{1},\ldots, y_{b},z_{1},\ldots,z_{c}\}$$ 
and 
\begin{align*}
E(\Delta_{a,b,c}) = & \{u_{0}x_{i},u_{1}x_{i},u_{2}x_{i}:1\leq i\leq a\}\cup\{u_{0}y_{j},u_{2}y_{j},u_{3}y_{j}:1\leq j\leq b\}
\\
 & \cup\{u_{0}z_{k},u_{3}z_{k},u_{1}z_{k}:1\leq k\leq c\}.
\end{align*}
The graphs $\Delta_{a,b,c}$ are known as the Malafiejski's rosettes.
Figure \ref{fig2} shows the graph $\Delta_{5,5,5}$.

\begin{figure}[h]
\begin{center}
\includegraphics[width=27pc]{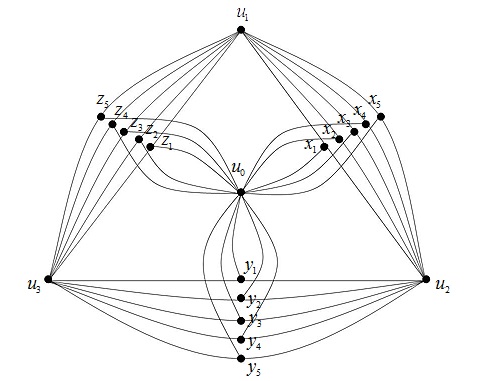}\\
\caption{The graph $\Delta_{5,5,5}$.}\label{fig2}
\end{center}
\end{figure}

Giaro et al.~\cite{GiaroKubaleMalaf1} showed that 
the graphs $S_k=S_{k,k,k}$ and $\Delta_k=\Delta_{k,k,k}$ 
satisfy
$\mathrm{def}(S_{k})\geq k-6$ and $\mathrm{def}(\Delta_{k})\geq k-4$ 
for each $k\geq 6$. Here, we show that these graphs have large local deficiency as well.
For an edge coloring $\varphi$ of a graph, denote by $\overline{\varphi}(v)$ and $\underline{\varphi}(v)$,
the largest and the smallest colors appearing on edges incident with $v$, respectively.

\begin{proposition}
\label{prop:rosettes} Let $k$ be a nonnegative integer.
\begin{itemize}
\item[(i)] If $a\geq 5k+7$, then $\mathrm{def}_{wloc}(S_{a,b,c})\geq k+1$.

\item[(ii)] If $r \geq 3k+5$, then $\mathrm{def}_{wloc}(\Delta_{r,s,t})\geq k+1$.
	\end{itemize}

\end{proposition}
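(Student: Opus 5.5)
We argue by contradiction. For (i), suppose $\varphi$ is a proper edge coloring of $S_{a,b,c}$ with $\mathrm{def}_{wloc}(S_{a,b,c},\varphi)\le k$, meaning every gap at every vertex has size at most $k$. We will use this in two elementary ways.

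First, at a vertex of degree $2$ the two incident colors differ by at most $k+1$. This applies to every $x_i,y_j,z_l$ and to $v_1,v_2,v_3$.

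Second, at a vertex $v$ of degree $d$ the colors lie in $[\underline\varphi(v),\overline\varphi(v)]$, every gap in $\varphi(v)$ has at most $k$ missing colors, and $\overline\varphi(v)-\underline\varphi(v)\le (k+1)(d-1)$.

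The aim is to pin down where the $a$ colors $\varphi(u_0x_i)$ sit relative to the colors at $u_1$, and then show that $u_0$ cannot accommodate them when $a$ is large. This is a local, quantitative version of the Giaro--Kubale--Małafiejski argument for $\mathrm{def}(S_k)$.

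The key steps for (i) are as follows.

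\emph{Step 1.} Each $x_i$ has degree $2$, so $|\varphi(u_0x_i)-\varphi(u_1x_i)|\le k+1$. Hence the $a$ distinct colors $X=\{\varphi(u_0x_i)\}$ at $u_0$ lie within distance $k+1$ of the set $\varphi(u_1)$. Since $\varphi(u_1)$ has only $a+2$ elements, the two sets are essentially ``matched''. In particular, $X$ and the $a$ colors $\varphi(u_1x_i)$ spread over ranges whose endpoints differ by at most $k+1$.

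\emph{Step 2.} Use the hexagon $u_1v_1u_2v_2u_3v_3$ through the degree-$2$ vertices $v_j$. Each $u_j$ has two hexagon edges, and consecutive hexagon edges differ by at most $k+1$. This shows that the hexagon colors at $u_1,u_2,u_3$ are all within roughly $3(k+1)$ of each other.

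\emph{Step 3.} At $u_1$, the $a$ colors on the $x$-edges together with the two hexagon colors form a set with gaps at most $k$. Therefore a long run of $x$-colors lies on at least one side of the hexagon colors.

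\emph{Step 4.} Transfer this structure to $u_0$. The colors at $u_0$ that come from $y$- and $z$-edges track $\varphi(u_2)$ and $\varphi(u_3)$. The latter meet the hexagon colors, which by Step~2 are near the hexagon colors of $u_1$.

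\emph{Step 5.} Count colors in a window of $u_0$. Near the hexagon color $h$ of $u_1$, the set $\varphi(u_0)$ must contain many $x$-colors, namely the block forced by Step~3 shifted by at most $k+1$. It must also contain $y$- and $z$-colors forced near the same place by $u_2$ and $u_3$. All of these colors are distinct at $u_0$. Comparing the available window length, about $5(k+1)$ in total after accumulating the shifts from Steps~1, 2 and~4, with the number of distinct colors forced into it gives a contradiction once $a\ge 5k+7$.

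Part (ii) for $\Delta_{r,s,t}$ follows the same scheme but is simpler. The degree-$3$ vertices $x_i$ directly link $u_0,u_1,u_2$: the three colors at $x_i$ lie in a window of span at most $2(k+1)$. No hexagon is needed, so the accumulated slack is about $3(k+1)$, which matches the hypothesis $r\ge 3k+5$.

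The main obstacle is Step~5: making the ``many colors forced into a short window of $u_0$'' count precise. Here I would first try an extremal-color argument. Take the largest color $c$ at $u_1$ (or at $u_0$ among $x$-colors) and follow it through the degree-$2$ and degree-$3$ vertices. This bounds $\overline\varphi(u_0)$ from two sides, which yields an interval of length about $5(k+1)$ (resp.\ $3(k+1)$) that must contain more than that many distinct colors of $u_0$.
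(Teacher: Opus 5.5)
\textbf{This has a genuine gap, and no argument can close it.} Step~5 is never made precise, and the contradiction it is meant to produce does not exist under the hypothesis $\mathrm{def}_{wloc}\le k$.

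Your own (correct) reading of that hypothesis shows why. Across a vertex $v$ of degree $d$ it only gives $\overline\varphi(v)-\underline\varphi(v)\le (k+1)(d-1)$. Run the extremal-color idea from your last paragraph along $y\,u_2v_2u_3\,z$, where the edges $u_0y$ and $u_0z$ carry $\underline\varphi(u_0)=p$ and $\overline\varphi(u_0)$. You get only $p+a+b+c-1\le\overline\varphi(u_0)\le p+(k+1)(b+c+5)$, i.e.\ $a\le k(b+c)+5k+6$. The slack grows with $b$ and $c$; it is not ``about $5(k+1)$''. Step~2 is also unsupported: the two hexagon edges at $u_j$ meet only at $u_j$, and you have no bound on their difference beyond the span of $\varphi(u_j)$.

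In fact Proposition~\ref{prop:rosettes2} gives weak near-interval colorings of $S_{a,a,a}$ and $\Delta_{a,a,a}$ for every $a$. So (i) and (ii) as written are false for every $k\ge 1$.

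The paper's proof actually assumes \emph{local} deficiency at most $k$, i.e.\ at most $k$ missing colors per vertex in total. That gives the additive bound $\overline\alpha(v)\le\underline\alpha(v)+d(v)+k-1$. Summing it along $y_{i_0}u_2v_2u_3z_{j_0}$ gives $\overline\alpha(u_0)\le p+5k+5+b+c$, hence $a\le 5k+6$. Along $v\,u_3\,w$ in $\Delta_{r,s,t}$ it gives $r\le 3k+4$. So what is provable is a lower bound on $\mathrm{def}_{loc}$, which is also how the proposition is used in the corollary after Proposition~\ref{prop:rosettes2}. Your fallback argument is exactly the paper's, but it closes only with the additive bound.

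\textbf{A second obstruction, which the paper's proof shares:} a hypothesis on $a$ (resp.\ $r$) alone is not enough, even for $k=0$, where the weak and ordinary versions coincide.

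For $S_{a,1,1}$, set $u_0x_i=i$, $u_1x_i=i+1$ $(1\le i\le a)$, $u_0y_1=a+1$, $u_2y_1=a+2$, $u_0z_1=a+2$, $u_3z_1=a+3$, $u_1v_1=a+2$, $u_1v_3=a+3$, $u_2v_1=a+3$, $u_2v_2=a+1$, $u_3v_2=a+2$, $u_3v_3=a+4$. This is an interval coloring for every $a$. Likewise, for $\Delta_{r,1,1}$, set $u_0x_i=i$, $u_2x_i=i+1$, $u_1x_i=i+2$, $u_0y_1=r+1$, $u_2y_1=r+2$, $u_3y_1=r+3$, $u_0z_1=r+2$, $u_1z_1=r+3$, $u_3z_1=r+4$; this is also an interval coloring.

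The reason is that the extreme colors at $u_0$ can lie on edges of any two of the three classes. The path argument then bounds only the third class, or, if both lie in one class, the sum of the other two. The paper silently places them on a $y$-edge and a $z$-edge. In (ii) it uses $d(u_i)\le s+t$, which holds only for $u_3$.

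A correct version of your plan therefore needs three things: the local (additive) span bound; all three parameters at least $5k+7$ (resp.\ $3k+5$); and a case split on which classes carry $\underline\varphi(u_0)$ and $\overline\varphi(u_0)$. With these, every case yields a contradiction.
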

This proposition thus implies that there are graphs $G$ with local deficiency  $\Omega(\Delta(G)/9)$.
\begin{proof} 
We first prove (i).
Let $G=S_{a,b,c}$. Suppose, to the contrary, that the graph $G$ has local deficiency at most $k$,
and consider a $q$-coloring $\alpha$ of $G$ realizing this property. Then $q\geq a+b+c$.

Let us consider two vertices $y_{i_0}$ and $z_{j_0}$ 
such that
$\alpha(y_{i_0}u_0)=\underline{\alpha}(u_0)$ and $\alpha(z_{j_0}u_0)=\overline{\alpha}(u_0)$. Let $p=\underline{\alpha}(u_0)$.
By the definition of $\alpha$, we have
$$p+a+b+c-1\leq \overline{\alpha}(u_0) \leq p+a+b+c+k-1.$$
Consider the path $P=y_{i_0} u_2 v_2 u_3 z_{j_0}$ in $G-u_0$
of length four joining $y_{i_0}$ with $z_{j_0}$.
Since $d(y_{i_0})=d(v_2)=2$, $d(u_2)=b+2$, and $d(u_3)=c+2$, we have

\begin{align*}
\alpha(y_{i_0}u_2) &\leq p+d(y_{i_0})+k-1=p+k+1, \\
\alpha(u_2v_2)  & \leq p+k+1+d(u_2)+k-1=p+2k+2+b,  \\
\alpha(v_2u_3)  &\leq p+2k+2+b+d(v_2)+k-1=p+3k+3+b,  \\
\alpha(u_3z_{j_0}) & \leq p+3k+3+b+d(u_3)+k-1=p+4k+4+b+c.  
\end{align*}

On the other hand, since $d(z_{j_0})=2$, we have

\begin{center}
$p+a+b+c-1\leq \alpha(u_0z_{j_0})=\overline{\alpha}(u_0) \leq
p+4k+4+b+c+d(z_{j_0})+k-1=p+5k+5+b+c$.
\end{center}
Hence, $a\leq 5k+6$, which is a contradiction.

\bigskip

Now we prove part (ii).
Let $G=\Delta_{r,s,t}$.
Suppose, to the contrary, that the graph $G$ has local deficiency at most $k$,
and consider a $q$-coloring $\beta$ realizing this property. Then $q\geq r+s+t$.

Consider the vertex $u_0\in V(G)$ with degree $\Delta(G)=r+s+t$. 
Let us consider two vertices $v$ and $w$ which are adjacent to the vertex $u_0$ and
$\beta(u_0v)=\underline{\beta}(u_0)$ and $\beta(u_0w)=\overline{\beta}(u_0)$. Let $p=\underline{\beta}(u_0)$.
By the definition of $\beta$, we have
$$p+r+s+t-1\leq \overline{\beta}(u_0)\leq p+r+s+t+k-1.$$
Consider the path $P=v u_i w$ in $G-u_0$ of length two joining
$v$ with $w$.
Since $d(v)=3$ and $d(u_i)\leq s+t$, we have
$$\beta(v u_i) \leq p+d(v)+k-1=p+k+2$$ 
and thus
$$\beta(u_i w)\leq p+k+2+ d(u_i)+k-1=p+2k+1+s+t.$$
On the other hand, since $d(w)=3$, we have
$$p+r+s+t-1\leq \beta(vw)=\overline{\beta}(v)\leq
p+2k+1+s+t+d(w)+k-1=p+3k+3+s+t.$$
Hence, $r\leq 3k+4$, which is a contradiction.
\end{proof}

Next, we have the following regarding weak local deficiency.

\begin{proposition}
\label{prop:rosettes2}
	The graphs $S_{a,a,a}$ and $\Delta_{a,a,a}$ have weak local deficiency $1$.
\end{proposition}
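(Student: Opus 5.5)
The lower bound is that neither graph is interval colorable, so its weak local deficiency is at least $1$. The upper bound comes from an explicit proper edge coloring that is weakly near-interval at every vertex. The idea is to \emph{interleave} the blocks of colors used at the hub $u_0$, so that the three ``spokes'' $u_1,u_2,u_3$ receive overlapping ranges of colors. This is what the gaps allowed by weak near-intervals make possible.

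\textbf{Lower bound.} I will invoke the known fact that Sevastjanov and Malafiejski rosettes are not interval colorable \cite{GiaroKubaleMalaf1}; then $\mathrm{def}_{wloc}\geq 1$ by the remark in the introduction that $\mathrm{def}_{wloc}(G)=0$ exactly when $G$ is interval colorable. For large $a$, Proposition \ref{prop:rosettes} with $k=0$ already gives this directly. For small $a$ I would check that the cited non-colorability covers all $a\geq 1$, or else redo the path-length argument of Proposition \ref{prop:rosettes} with $k=0$. This is the part I am least sure covers every $a$, and I expect it to be the main obstacle.

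\textbf{Construction for $S_{a,a,a}$.} At $u_0$ set $u_0x_i=2i-1$, $u_0y_j=2j$ and $u_0z_k=2a+k$, so $u_0$ sees the interval $[1,3a]$. On the spoke edges set $u_1x_i=2i$, $u_2y_j=2j+1$ and $u_3z_k=2a+k+1$. Then each degree-$2$ vertex sees two consecutive colors. On the hexagon set
\begin{align*}
u_1v_1&=0, & v_1u_2&=1, & u_2v_2&=2a+2,\\
v_2u_3&=2a+1, & u_3v_3&=2a, & v_3u_1&=2a+1.
\end{align*}
One checks that $u_1$ sees $\{0,2,4,\dots,2a,2a+1\}$, that $u_2$ sees $\{1,3,\dots,2a+1,2a+2\}$, and that $u_3$ sees the interval $[2a,3a+1]$. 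Each $v_i$ sees two consecutive colors. All these sets are weak near-intervals, since every gap has size at most $1$. Properness is immediate at every vertex. The interleaving (odd colors to the $x_i$, even colors to the $y_j$) is precisely what lets the hexagon close up, which is impossible in an interval coloring.

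\textbf{Construction for $\Delta_{a,a,a}$.} I would proceed analogously. At $u_0$, give the $x_i$ the colors $3i-2$, the $y_j$ the colors $3j-1$ and the $z_k$ the colors $3k$, or a similar doubled and interleaved scheme. Then give each degree-$3$ vertex its two remaining colors, adjacent to its $u_0$-color (e.g.\ $x_i$ gets $3i-2\pm1$ or $\pm 2$). These colors are chosen so that each of $u_1,u_2,u_3$ receives colors forming a weak near-interval and no color is repeated at a $u_\ell$. I would first try the assignment in which each $u_\ell$ takes one of its two blocks in the even positions and the other in the odd positions of a common range, mimicking the $S$ case. Then I would verify properness and the gap condition by a short direct check.
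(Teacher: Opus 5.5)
Your construction for $S_{a,a,a}$ is correct. Under it, $u_0$ sees $[1,3a]$, $u_1$ sees $\{0,2,4,\dots,2a,2a+1\}$, $u_2$ sees $\{1,3,\dots,2a+1,2a+2\}$ and $u_3$ sees $[2a,3a+1]$. Every vertex of degree $2$ sees two consecutive colors. You use the same odd/even interleaving at $u_0$ as the paper. The paper puts the unit gaps at the degree-$2$ vertices ($u_0x_i=2i-1$, $x_iu_1=2i+1$, etc.) and closes the hexagon with colors $2a+1,\dots,2a+4$. You put the gaps at $u_1,u_2$ and close it with $0,1$ and $2a,2a+1,2a+2$. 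Either works.

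The genuine gap is $\Delta_{a,a,a}$, which is half of the statement. You never fix a coloring. Your sentence ``these colors are chosen so that each of $u_1,u_2,u_3$ receives a weak near-interval and no color is repeated'' is exactly what has to be proved. The check is not automatic inside your family: for instance, $u_1x_i=3i-1$ together with $u_1z_k=3k+2$ repeats the color $3k+2$ at $u_1$. With your colors at $u_0$, one assignment that does work is
\[
u_1x_i=3i-1,\quad u_2x_i=3i,\quad u_2y_j=3j+1,\quad u_3y_j=3j,\quad u_1z_k=3k+1,\quad u_3z_k=3k+2 .
\]
Then every $x_i,y_j,z_k$ sees three consecutive colors. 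The vertices $u_1,u_2,u_3$ see exactly the residue classes $\{1,2\}$, $\{0,1\}$, $\{0,2\}$ modulo $3$ inside $[2,3a+1]$, $[3,3a+1]$, $[3,3a+2]$, respectively. So the colors at each $u_\ell$ are distinct, and the colors missing from its range form a single residue class, hence are pairwise non-consecutive. For comparison, the paper colors the edges from $x_i$ to $u_0,u_1,u_2$ with $2i-1,2i+1,2i+3$, from $z_i$ to $u_0,u_1,u_3$ with $2i,2i+2,2i+4$, and from $y_i$ to $u_0,u_2,u_3$ with $2a+2i-1,2a+2i,2a+2i+1$. There $u_1$ sees an interval and $u_0$ only a weak near-interval.

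Your worry about the lower bound is justified, because it is false for small $a$. $S_{1,1,1}$ and $\Delta_{1,1,1}$ are bipartite with maximum degree $3$, hence interval colorable (Hansen's result on subcubic bipartite graphs, recalled in the introduction). So their weak local deficiency is $0$. What holds for every $a$ is the upper bound $\mathrm{def}_{wloc}\leq 1$, and that is all the paper's proof establishes. Equality holds once $a\geq 7$ for $S_{a,a,a}$ and $a\geq 5$ for $\Delta_{a,a,a}$, by Proposition~\ref{prop:rosettes} with $k=0$. Do not try to prove the lower bound for every $a$.
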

\begin{proof}
	Let us first consider the Sevastjanov rosettes. 

	We color the $a$ $2$-edge paths from $u_0$ to $u_1$ by colors $2i-1, 2i+1$, $i=1,\dots, a$ 
	(smallest color on the first edge of the path throughout),
	and similarly the $a$ $2$-edge paths from $u_0$ to $u_2$ by colors $2i, 2i+2$, $i=1,\dots,a$.
	Next, we color the $a$ $2$-edge paths from $u_0$ to $u_3$ by colors $2a+i, 2a+i+2$, $i=1,\dots,a$.
	Note that this yields an edge coloring that is interval at $u_0$, and weakly near-interval at all 
	$x_i, y_i$ and $z_i$.

	Finally, we color the edges on the path $v_{2} u_2 v_{1} u_1 v_{3}$ alternately by colors $2a+3, 2a+4$ 
	(starting with $2a+3$),
	and then $u_3 v_{2}$ by $2a+1$ and $u_3 v_{3}$ by $2a+2$. This yields a weak near-interval coloring of $S_{a,a,a}$.

\bigskip

	Next, we consider the graph $\Delta_{a,a,a}$.

	We shall sequentially color the edges incident to the vertices of degree $3$ in $G$. First
	we color the edges incident with vertices adjacent to $u_0, u_1, u_2$. 
	We color the edges incident with such vertices
	by colors $2i-1, 2i+1, 2i+3$, $i=1,\dots, a$, so that the edge incident with $u_0$ is colored $2i-1$, 
	the one incident 
	with $u_1$
	is colored $2i+1$, and the one incident with $u_2$ is colored $2i+3$. 
	Note that this yields a partial edge coloring that is
	weakly near-interval at every vertex.

	Next, we color the edges incident with vertices adjacent to $u_0, u_1, u_3$. 
	These edges are colored by $2i, 2i+2, 2i+4$, $i=1,\dots, a$,
	so that the edge incident with $u_0$ is colored $2i$, the one incident with 
	$u_1$ is colored $2i+2$, and the one incident with
	$u_3$ is colored $2i+4$. Note that this yields a partial edge coloring that is 
	weakly near-interval at every vertex.

	Third, we color the edges incident with vertices adjacent to 
	$u_0, u_2, u_3$. These edges are colored $2a+2i-1, 2a+2i, 2a+2i+1$
	so that the edge incident with $u_0$ is colored $2a+2i-1$, 
	the one incident with $u_2$ is colored $2a+2i$, and the one
	incident with $u_3$ is colored $2a+2i+1$. This yields a proper edge coloring that is 
	weakly near-interval at every vertex of 
	$\Delta_{a,a,a}$.
\end{proof}

From the preceding two propositions, we deduce the following.

\begin{corollary}
	For every positive integer $k$, there is a graph $G_k$ such that $\mathrm{def}_{loc}(G) - \mathrm{def}_{wloc}(G) \geq k$.
\end{corollary}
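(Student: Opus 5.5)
The plan is to take $G_k$ to be a Sevastjanov rosette $S_{a,a,a}$ with $a$ large compared to $k$, or alternatively a Malafiejski rosette $\Delta_{a,a,a}$. Proposition \ref{prop:rosettes2} gives $\mathrm{def}_{wloc}(S_{a,a,a})=1$ for every $a$. The only work is to obtain a lower bound on $\mathrm{def}_{loc}(S_{a,a,a})$ that grows with $a$. For that we would use Proposition \ref{prop:rosettes}, together with the observation that its proof really bounds local deficiency.

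Concretely, the argument in the proof of Proposition \ref{prop:rosettes} starts from a coloring in which at most $k$ colors are missing at each vertex. It then uses exactly two bounds:
\begin{itemize}
\item for any edge $uv$, $\alpha(uv)\le \underline{\alpha}(u)+d(u)+k-1$;
\item $\overline{\alpha}(u_0)\le \underline{\alpha}(u_0)+d(u_0)+k-1$.
\end{itemize}
These are precisely the consequences of adding at most $k$ pendant edges at every vertex to get an interval coloring. So the proof shows that $\mathrm{def}_{loc}(S_{a,b,c})\ge k+1$ whenever $a\ge 5k+7$, and $\mathrm{def}_{loc}(\Delta_{r,s,t})\ge k+1$ whenever $r\ge 3k+5$. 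Since the paper states the result for $\mathrm{def}_{wloc}$ but argues via local deficiency, I would take care to state explicitly that it is the local version being used.

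Given $k$, we set $a=5(k+1)+7=5k+12$. Applying the above with $k+1$ in place of $k$ gives $\mathrm{def}_{loc}(S_{a,a,a})\ge k+2$. Combined with $\mathrm{def}_{wloc}(S_{a,a,a})=1$, this yields a difference of at least $k+1\ge k$. Equivalently, we could take $\Delta_{a,a,a}$ with $a=3k+8$.

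The main obstacle is the interpretive step. The lower-bound proof as written requires interval-type inequalities: a vertex of degree $d$ uses colors within a window of length at most $d+k-1$. This holds under local deficiency at most $k$, but not under weak local deficiency. Weak local deficiency $1$ allows a vertex to have many isolated gaps, so the window is unbounded in $k$. This is consistent with Proposition \ref{prop:rosettes2}. I would therefore re-verify each inequality in that proof using the local definition: at most $k$ colors are missing from $[\underline{\alpha}(v),\overline{\alpha}(v)]$, so $\overline{\alpha}(v)-\underline{\alpha}(v)\le d(v)+k-1$. With that check done, the corollary follows immediately.
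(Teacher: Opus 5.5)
Your proposal is correct and is exactly the paper's argument. The paper deduces the corollary from Proposition~\ref{prop:rosettes}, whose proof (as you rightly note) actually bounds $\mathrm{def}_{loc}$; read literally with $\mathrm{def}_{wloc}$ it would contradict Proposition~\ref{prop:rosettes2}. This is combined with the weak near-interval colorings of $S_{a,a,a}$ and $\Delta_{a,a,a}$ from Proposition~\ref{prop:rosettes2}.

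One small point for your re-verification of part (i). Placing the extreme colors of $u_0$ on edges to $y_{i_0}$ and $z_{j_0}$ is a valid normalization only because $a=b=c$ lets you permute the three classes. The case where both extreme edges go to the same class, say $x_i$ and $x_{i'}$, is missing; the path $x_iu_1x_{i'}$ handles it and gives $2a\le 3k+4$, an even stronger contradiction. For $\Delta_{a,a,a}$ no such normalization is needed.
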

	
	Next, we consider the so-called generalized Hertz graphs, which was first considered in \cite{PetrosHrant}.

Let $T$ be a tree, $\mathcal{P}$ be the set of all paths in $T$ and $F(T)$ the set of all leaves in $T$,
We define the set $M(T)$ setting
\begin{equation*}
M(T)=\max_{P \in \mathcal{P}} \{|E(P)| + |\{uw \in E(T) : u \in V(P),
w \notin V(P)\}|\}.
\end{equation*}
Thus,  $M(T)$ is the maximum
number of edges with at least one endpoint in a single path in $T$.

Now let us define the graph $\widetilde{T}$ from $T$ and a new vertex $u$ as follows:
$$V(\widetilde{T})=V(T)\cup \{u\},
E(\widetilde{T})=E(T)\cup \{uv:v\in F(T)\}.$$
Clearly, $\widetilde{T}$ is a connected graph with
$\Delta(\widetilde{T})=\vert F(T)\vert$. Moreover, if $T$ is a tree
in which the distance between any two leaves is even, then
$\widetilde{T}$ is a connected bipartite graph.

We denote by $\mathrm{diam}(G)$ the diameter of a graph $G$. For generalized Hertz graphs,
we have the following.

\begin{theorem}
\label{trees} For any nonnegative integer $k$, if $T$ is a tree and $\vert F(T)\vert >
k(\mathrm{diam}(T)+1)+M(T)+2$, then $\mathrm{def}_{loc}(\widetilde{T})\geq k+1$.
\end{theorem}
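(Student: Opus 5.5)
We argue by contradiction, in the style of the proof of Proposition~\ref{prop:rosettes}. Suppose $\mathrm{def}_{loc}(\widetilde{T})\leq k$. Fix a proper edge coloring $\alpha$ of $\widetilde{T}$ such that, at every vertex $v$, the set $\alpha(v)$ becomes an interval after adding at most $k$ colors. Then for every vertex $v$,
$$\overline{\alpha}(v)-\underline{\alpha}(v)\leq d(v)+k-1.$$
Let $m=|F(T)|=d(u)$, and choose leaves $v,w\in F(T)$ with $\alpha(uv)=\underline{\alpha}(u)=p$ and $\alpha(uw)=\overline{\alpha}(u)$. Since $u$ carries $m$ distinct colors, $\overline{\alpha}(u)\geq p+m-1$. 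Note that $v\neq w$ whenever $m\geq 2$, which is guaranteed by the hypothesis.

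Next, take the unique path $P=v=y_0y_1\dots y_\ell=w$ in $T$, where $\ell\leq \mathrm{diam}(T)$. We bound $\overline{\alpha}(y_\ell)$ from above by walking along $P$. The key step is to sum the span bounds rather than apply them one at a time naively. At each vertex $y_i$, the edge $y_iy_{i+1}$ has color at most $\underline{\alpha}(y_i)+d(y_i)+k-1$, and $\underline{\alpha}(y_i)\leq \alpha(y_{i-1}y_i)$. Thus
$$\alpha(y_iy_{i+1})\leq \alpha(y_{i-1}y_i)+d_{\widetilde T}(y_i)+k-1,$$
where at $y_0$ we start from $\alpha(uy_0)=p$. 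Chaining these inequalities over $y_0,\dots,y_\ell$ and then applying the final span bound at $y_\ell$ to its edge $uw$ gives
$$\overline{\alpha}(u)=\alpha(uw)\leq p+\sum_{i=0}^{\ell}\bigl(d_{\widetilde T}(y_i)+k-1\bigr).$$

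It remains to estimate $\sum_{i=0}^{\ell}(d_{\widetilde T}(y_i)-1)$ using $M(T)$. In $\widetilde T$, each endpoint has degree $2$, and the interior vertices have their $T$-degrees, assuming no interior vertex of $P$ is a leaf (which holds since leaves have degree $1$ in $T$). Hence $\sum_i d_{\widetilde T}(y_i)=\sum_i d_T(y_i)+2$. Moreover, $\sum_i d_T(y_i)$ counts each edge of $P$ twice and each edge leaving $P$ once, so
$$\sum_i d_T(y_i)=2\ell+\bigl|\{xz\in E(T): x\in V(P),\, z\notin V(P)\}\bigr|\leq \ell+M(T).$$
Therefore
$$\sum_{i=0}^{\ell}\bigl(d_{\widetilde T}(y_i)-1\bigr)\leq \ell+M(T)+2-(\ell+1)=M(T)+1,$$
and the total bound becomes $\overline{\alpha}(u)\leq p+M(T)+1+k(\ell+1)$. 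Combining this with $\overline{\alpha}(u)\geq p+m-1$ and $\ell\leq\mathrm{diam}(T)$ yields
$$m\leq k(\mathrm{diam}(T)+1)+M(T)+2,$$
which contradicts the hypothesis.

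The main obstacle is the bookkeeping in this last step: it must be checked carefully that the span inequality telescopes correctly, that the additive constants match the stated bound exactly (off-by-one errors at the endpoints $v$ and $w$, whose $\widetilde T$-degree is $2$), and that the count of edges leaving $P$ is correctly related to $M(T)$. If the constants come out one smaller than in the statement, that is harmless, since the theorem's hypothesis would then be more than enough.
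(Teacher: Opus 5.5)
Your proof is correct and follows essentially the same route as the paper's. You chain the span bound $\overline{\alpha}(x)-\underline{\alpha}(x)\le d(x)+k-1$ along the tree path joining the leaves that carry $\underline{\alpha}(u)$ and $\overline{\alpha}(u)$, and bound the degree sum along that path by $M(T)$. This gives exactly the paper's estimate $\overline{\alpha}(u)\le p+1+k(\ell+1)+M(T)$, and the bookkeeping checks out. The only cosmetic difference is that you work with $d_{\widetilde T}$ and the exact edge count $\sum_i d_T(y_i)=2\ell+(\text{number of edges leaving } P)$. The paper instead uses $d_T$ and accounts for the two endpoint edges to $u$ separately, via the initial $+1$ and the final $+k+1$.
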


\begin{proof}
Suppose, to the contrary, that $\widetilde{T}$ has local deficiency at most $k$, and
let $\alpha$ be proper $t$-coloring realizing this property, where $t\geq \vert F(T)\vert$.

Consider the vertex $u$. Let $v$ and $v^{\prime}$ be two vertices
adjacent to $u$ such that $\alpha(uv)=\underline{\alpha}(u)=s$ and
$\alpha(uv^{\prime})=\overline{\alpha}(u)\geq s+\vert F(T)\vert-1$. Since $\widetilde{T}-u$ is a tree, there is a unique path
$P(v,v^{\prime})$ in $\widetilde{T}-u$ joining $v$ with
$v^{\prime}$. Set
$$P(v,v^{\prime})=x_{1}x_{2}\ldots x_{i} x_{i+1}\ldots
x_{r}  x_{r+1},$$ 
where $x_{1}=v$, $x_{r+1}=v^{\prime}$.

Note that
\begin{center}
$\alpha(x_{i}x_{i+1})\leq s+1+\underset{j=1}{\overset{i}{\sum
}}(d_{T}(x_{j})+k-1)$ for $1\leq i\leq r$.
\end{center}

From this, we have

$$\alpha(x_{r}x_{r+1})=\alpha(x_{r}v^{\prime})\leq
s+1+\underset{j=1}{\overset{r}{\sum
}}(d_{T}(x_{j})+k-1)\leq s+M(T)+kr.$$

Hence

$$s+\vert F(T)\vert-1\leq \overline{\alpha}(u)=\alpha(uv^{\prime})\leq
s+M(T)+kr+k+1=s+1+k(r+1)+M(T),$$ 
taking into account that $r\leq \mathrm{diam}(T)$, we obtain 
$$\vert F(T)\vert\leq k(\mathrm{diam}(T)+1)+M(T)+2,$$
which is a contradiction. 
\end{proof}

As mentioned above,
our constructions with trees generalize the so-called
Hertz's graphs $H_{p,q}$, first described in
\cite{GiaroKubaleMalaf1}, which can be defined as
follows:
$$V(H_{p,q})=\left\{a,b_{1},b_{2},\ldots,b_{p},d\right\}\cup
\left\{c_{j}^{(i)}:1\leq i\leq p, 1\leq j\leq q\right\}$$ and
$$E(H_{p,q})=E_{1}\cup E_{2}\cup E_{3},$$
where
\begin{align*}
E_{1}= & \left\{ab_{i}:1\leq i\leq p\right\}, \\
E_{2}= & \left\{b_{i}c_{j}^{(i)}:1\leq i\leq p, 1\leq j\leq  
q\right\}, \\
E_{3}= & \left\{c_{j}^{(i)}d:1\leq i\leq p, 1\leq j\leq q
\right\}.
\end{align*}
Hertz's graphs are bipartite and has maximum degree $pq$.
Thus we have the following consequence of the preceding theorem.

\begin{corollary}
\label{Hertz graphs} For any positive integer $p,q\geq 2$ and  nonnegative integer $k$, if $pq>5k+p+2q+2$, then $\mathrm{def}_{loc}(H_{p,q})\geq k+1$.
\end{corollary}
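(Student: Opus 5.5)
Since Corollary~\ref{Hertz graphs} is stated as a consequence of Theorem~\ref{trees}, the plan is to realize $H_{p,q}$ as $\widetilde{T}$ for a suitable tree $T$, compute $|F(T)|$, $\mathrm{diam}(T)$ and $M(T)$, and substitute into the hypothesis of Theorem~\ref{trees}.

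\textbf{The tree.} Take $d$ as the added vertex $u$. Then $T=H_{p,q}-d$ is the tree with root $a$, children $b_1,\dots,b_p$, and $q$ leaves $c^{(i)}_1,\dots,c^{(i)}_q$ under each $b_i$. Since $p\ge 2$, $a$ is not a leaf, so the leaves of $T$ are exactly the $pq$ vertices $c^{(i)}_j$. These are precisely the neighbours of $d$, so $\widetilde{T}=H_{p,q}$ and $|F(T)|=pq$. The longest path in $T$ is $c^{(i)}_j b_i a b_{i'} c^{(i')}_{j'}$ with $i\ne i'$, hence $\mathrm{diam}(T)=4$.

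\textbf{Computing $M(T)$.} This is the only step needing care. For a path $P$, the quantity counted is the number of edges with at least one endpoint on $P$. That number only grows when $P$ is extended, so it suffices to consider maximal paths. Up to symmetry these are the leaf-to-leaf paths through $a$ described above. The edges meeting such a path are:
\begin{itemize}
\item all $q$ edges at $b_i$ and all $q$ edges at $b_{i'}$, which already include the two end edges $c b_i$ and $b_{i'}c'$;
\item the two edges $ab_i$ and $ab_{i'}$;
\item the remaining $p-2$ edges at $a$.
\end{itemize}
This gives $M(T)=2q+2+(p-2)=p+2q$. As a check, $M(T)=\sum_{x\in V(P)} d_T(x)-|E(P)|=(1+(q+1)+p+(q+1)+1)-4=p+2q$. (A path ending at $a$ or at some $b_i$ is contained in a maximal one, so it gives nothing larger.)

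\textbf{Conclusion.} Theorem~\ref{trees} requires
$$|F(T)|>k(\mathrm{diam}(T)+1)+M(T)+2,$$
which here reads $pq>5k+p+2q+2$. This is exactly the hypothesis of the corollary, so $\mathrm{def}_{loc}(H_{p,q})\ge k+1$.
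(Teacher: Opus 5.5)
Your proposal is correct and follows exactly the paper's route: take $T=H_{p,q}-d$, so that $|F(T)|=pq$, $\mathrm{diam}(T)=4$ and $M(T)=p+2q$, and substitute into Theorem~\ref{trees} to obtain the condition $pq>5k+p+2q+2$. One small slip: the leaf-to-leaf paths $c^{(i)}_j b_i c^{(i)}_{j'}$ are also maximal and do not pass through $a$, but each meets only $q+1<p+2q$ edges, so $M(T)=p+2q$ still holds.
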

Thus there are (bipartite) graphs $G$ with local deficiency $\Omega(\Delta(G) /5)$ and $\Omega(|V(G)|/5)$.

Next, we investigate the weak local deficiency of generalized Hertz graphs.

\begin{theorem}
\label{treess} For any nonnegative integer $k$, if $T$ is a tree and $\vert F(T)\vert >
(k+1)(M(T)+1)+1$, then $\mathrm{def}_{wloc}(\widetilde{T})\geq k+1$.
\end{theorem}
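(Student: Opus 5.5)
We argue by contradiction, following the proof of Theorem \ref{trees}, with one change: instead of bounding by ``degree plus $k$'' at each vertex, we use the span bound that a weak local deficiency of at most $k$ imposes. Suppose $\mathrm{def}_{wloc}(\widetilde{T})\leq k$, and fix a proper edge coloring $\alpha$ of $\widetilde{T}$ with $\mathrm{def}_{wloc}(\widetilde{T},\alpha)\leq k$.

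\textbf{Span bound.} If $\mathrm{def}_{wloc}(\alpha(x))\leq k$, then any two consecutive elements of the sorted set $\alpha(x)$ differ by at most $k+1$. Hence
$$\overline{\alpha}(x)-\underline{\alpha}(x)\leq (d_{\widetilde{T}}(x)-1)(k+1).$$
In particular, any two edges incident with $x$ have colors differing by at most $(d_{\widetilde{T}}(x)-1)(k+1)$. This needs no monotonicity along paths, so colors may go up or down as we travel.

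\textbf{Setup at $u$.} As in Theorem \ref{trees}, pick leaves $v,v'$ with $\alpha(uv)=\underline{\alpha}(u)=s$ and $\alpha(uv')=\overline{\alpha}(u)$. Since the colors at $u$ are distinct and there are $|F(T)|$ of them, $\alpha(uv')\geq s+|F(T)|-1$. Let $P=x_1x_2\dots x_{r+1}$ be the path in $T$ with $x_1=v$ and $x_{r+1}=v'$.

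\textbf{Walking along $P$.} We follow the closed walk $u,x_1,\dots,x_{r+1},u$ and apply the span bound at each vertex.
\begin{itemize}
\item At $x_1$, which has degree $2$ in $\widetilde{T}$: $\alpha(x_1x_2)\leq s+(k+1)$.
\item At each internal vertex $x_j$, $2\leq j\leq r$, whose degree in $\widetilde{T}$ equals $d_T(x_j)$: the next edge exceeds the previous one by at most $(d_T(x_j)-1)(k+1)$.
\item At $x_{r+1}$, of degree $2$: $\alpha(uv')\leq \alpha(x_rx_{r+1})+(k+1)$.
\end{itemize}
Summing these steps gives
$$\alpha(uv')\leq s+(k+1)\Bigl(2+\sum_{j=2}^{r}(d_T(x_j)-1)\Bigr).$$

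\textbf{Relating the sum to $M(T)$.} Count the edges of $T$ with an endpoint on $P$. There are $r$ edges of $P$, and each internal vertex $x_j$ contributes $d_T(x_j)-2$ further edges; the endpoints are leaves and contribute nothing more. Therefore
$$\sum_{j=2}^{r}(d_T(x_j)-2)+r\leq M(T),\qquad\text{so}\qquad \sum_{j=2}^{r}(d_T(x_j)-1)\leq M(T)-1.$$

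\textbf{Conclusion.} Combining, $s+|F(T)|-1\leq \alpha(uv')\leq s+(k+1)(M(T)+1)$. Hence $|F(T)|\leq (k+1)(M(T)+1)+1$, contradicting the hypothesis.

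The main point to get right is the bookkeeping, rather than any deep obstacle. One must use degrees in $\widetilde{T}$, where the leaves have degree $2$ and internal vertices keep their degree from $T$. One must also check that the count of edges meeting $P$ gives exactly $M(T)-1$ for the sum, so that the constants match the threshold $(k+1)(M(T)+1)+1$ in the statement.
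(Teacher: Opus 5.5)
Your proof is correct and follows essentially the same route as the paper's. You assume a coloring with weak local deficiency at most $k$, take the minimum and maximum colors at $u$, and bound the color change along the tree path between the two corresponding leaves by $(k+1)(d(x_j)-1)$ per vertex, reaching the same inequality $\alpha(uv')\le s+(k+1)(M(T)+1)$. You also explicitly check that $1+\sum_{j=2}^{r}(d_T(x_j)-1)$ is exactly the number of edges meeting $P$, and hence at most $M(T)$, a step the paper only asserts.
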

\begin{proof}
We proceed as in the proof of the preceding theorem.
Suppose, to the contrary, that $\widetilde{T}$ has weak local deficiency at most $k$, and
let $\alpha$ be proper $t$-coloring realizing this property, where $t\geq \vert F(T)\vert$.

As above, let $v$ and $v^{\prime}$ be two vertices adjacent to $u$ such that 
$\alpha(uv)=\underline{\alpha}(u)=s$ and
$\alpha(uv^{\prime})=\overline{\alpha}(u)\geq s+\vert F(T)\vert-1$. Consider the unique path
$P(v,v^{\prime})$ in $\widetilde{T}-u$ joining $v$ with
$v^{\prime}$. We set
$$P(v,v^{'})=x_{1}x_{2}\ldots x_{i}x_{i+1}\ldots
x_{r} x_{r+1},$$ 
where $x_{1}=v$, $x_{r+1}=v^{\prime}$.
Then, by the choice of $\alpha$,
\begin{center}
$\alpha(x_{i}x_{i+1})\leq s+k+1+\underset{j=1}{\overset{i}{\sum
}}(k+1)(d_{T}(x_{j})-1)$ for $1\leq i\leq r$.
\end{center}
So
$$\alpha(x_{r}x_{r+1})=\alpha(x_{r}v^{\prime})\leq
s+k+1+\underset{j=1}{\overset{r}{\sum
}}(k+1)(d_{T}(x_{j})-1)\leq s+(k+1)M(T).$$
Hence
$$s+\vert F(T)\vert-1\leq \overline{\alpha}(u)=\alpha(uv^{\prime})\leq
s+(k+1)M(T)+k+1=s+(k+1)(M(T)+1),$$ 
and so,
$$\vert F(T)\vert\leq (k+1)(M(T)+1)+1,$$
which is a contradiction. 
\end{proof}


Consider the graph $H_{p,q}$ defined above.  
We have that $\Delta (H_{p,q})=pq$ and $\vert V(H_{p,q})\vert=pq+p+2$. For the tree $T=H_{p,q}-d$, we have that $M(T)=p+2q$ and
$\vert F(T)\vert=pq$. From Theorem \ref{treess}, we can thus deduce the following.

\begin{corollary}
\label{Hertz graphss} For any positive integers $p,q\geq 2$ and nonnegative 
integer $k$, if $pq>(k+1)(p+2q+1)+1$, then $\mathrm{def}_{wloc}(H_{p,q})\geq k+1$.
\end{corollary}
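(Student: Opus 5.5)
The corollary should be a direct instance of Theorem \ref{treess}, applied to the tree $T=H_{p,q}-d$. The plan has three steps: check that $\widetilde{T}$ is $H_{p,q}$, compute $|F(T)|$ and $M(T)$, and substitute into the hypothesis of Theorem \ref{treess}.

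\emph{Identifying $\widetilde{T}$.} Since $q\geq 2$, the leaves of $T$ are exactly the vertices $c_j^{(i)}$. Each $b_i$ has degree $q+1\geq 3$ in $T$, and $a$ has degree $p\geq 2$. Hence $F(T)=\{c_j^{(i)}\}$ and $|F(T)|=pq$. Adding a new vertex joined to all leaves of $T$ gives back $H_{p,q}$, with $d$ playing the role of $u$. Therefore $\widetilde{T}\cong H_{p,q}$.

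\emph{Computing $M(T)$.} Recall that $M(T)$ is the maximum number of edges having at least one endpoint on a single path. I would do a case check on the possible paths.
\begin{itemize}
\item A path through $a$ joining two different branches, such as $c^{(i)}_j b_i a b_{i'} c^{(i')}_{j'}$, covers all $p$ edges at $a$ and all $q$ edges at $b_i$ and at $b_{i'}$. This gives $p+2q$ edges, since the edge set is $\{ab_l\}\cup E_2(b_i)\cup E_2(b_{i'})$.
\item A path inside one branch, such as $c b_i c'$, covers at most $q+1$ edges.
\item A path ending at $a$ covers at most $p+q$ edges.
\end{itemize}
Since $p,q\geq 2$, the maximum is $M(T)=p+2q$. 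This is the step needing the most care, but it is a finite check.

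\emph{Substitution.} With these values, the hypothesis $pq>(k+1)(p+2q+1)+1$ is exactly $|F(T)|>(k+1)(M(T)+1)+1$. Theorem \ref{treess} then gives $\mathrm{def}_{wloc}(H_{p,q})=\mathrm{def}_{wloc}(\widetilde{T})\geq k+1$.
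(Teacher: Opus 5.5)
Your proposal is correct and follows the paper's own argument: take $T=H_{p,q}-d$, observe $\widetilde{T}=H_{p,q}$, $|F(T)|=pq$ and $M(T)=p+2q$, and substitute into Theorem \ref{treess}. Your three-way case split (paths avoiding $a$, paths ending at $a$, paths with $a$ as an interior vertex) is exhaustive, so it justifies the value $M(T)=p+2q$, which the paper only asserts.
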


This shows that there are families of graphs whose weak local deficiency grows with the number of vertices 
as well as with the maximum degree.

\section{Concluding remarks}

In this paper, we have introduced and studied the local and weak local deficiency of graphs.
We have shown that some families of graphs have small weak local deficiency, and given
several examples of graphs with large (weak) local deficiency.
In particular, there are families of graphs with local deficiency at least $\Omega(\Delta(G)/5)$.
Thus, the following question is natural.

\begin{problem}
	What is the order of growth of the maximum (weak) local deficiency 
	of graphs of maximum degree $\Delta(G)$?
\end{problem}

Similarly, we showed that there are families of graphs with local deficiency at least $\Omega(|V(G)|/5)$, so we could
ask the same question in terms of order of a graph.

We also showed that the difference between the deficiency, local deficiency and weak local deficiency can be
arbitrarily large. However, the following remains open.

\begin{problem}
	For any three positive integers $k >l > m$, is there a graph $G_{k,l,m}$
	such that $\defi(G_{k,l,m}) =k$,  $\defi_{loc}(G_{k,l,m}) = l$,
	$\defi_{wloc}(G_{k,l,m}) = m$?
\end{problem}

\bigskip

 In terms of maximum degree and the number of vertices, the smallest examples 
of graphs with no interval colorings have maximum degree $11$ and $19$ vertices, respectively.
Moreover, it was proved in \cite{CasselgrenPetrosyan}, that there is a graph with maximum degree $18$ with local deficiency at least $1$.

Consider the Hertz graph $H_{7,6}$ with $\Delta(H_{7,6})=42$ and 
$|V(H_{7,6})|=51$. From Corollary \ref{Hertz graphss}, we have that $H_{7,6}$ has no weak near-interval coloring.
This is the smallest example of a graph with weak local deficiency at least $1$ that we know of. 
We expect that there is a smaller example,
both in terms of maximum degree and number of vertices.

\begin{problem}
	What is the smallest graph, in terms of order or maximum degree, that has no weak near-interval coloring?
\end{problem}

More generally, we are interested in the following question.

\begin{problem}
\begin{itemize}
	\item[(i)]	What is the smallest graph, in terms of order or maximum degree, 
	with weak local deficiency at least $k$?

	\item[(ii)] What is the smallest graph, in terms of order or maximum degree,
	with local deficiency $k$?
\end{itemize}
\end{problem}

As regards constructive results, a natural continuation of the results in this paper would be to show that
graphs with maximum degree $5$ admit weak near-interval colorings; although this appears to be
somewhat difficult, we believe it to be a tractable problem. Similarly, proving that bipartite graphs
with maximum degree $8$ admit weak near-interval colorings would be a natural aim given the results
we obtained in this paper. 

\begin{conjecture}
$\,$
\begin{itemize}

\item[(i)] Graphs with maximum degree at most $5$ have weak local deficiency at most $1$.

\item[(ii)] Graphs $G$ with $\Delta(G)-\delta(G)\leq 2$ have weak local deficiency at most $1$.

\item[(iii)] Bipartite graphs with maximum degree at most $8$ have weak local deficiency at most $1$.

\end{itemize}

\end{conjecture}

As for complete multipartite graphs, there are known examples
of complete multipartite graphs with local deficiency $1$ (indeed, any such graph of Class 2), 
but we do not know whether there
are such examples with (weak) local deficiency $2$.

\begin{problem}
Is there a complete multipartite graph with (weak) local deficiency $2$?
\end{problem}

\end{document}